\documentclass[11pt]{article}
\usepackage{amsfonts}
\usepackage{mathrsfs}
\usepackage{amssymb}
\usepackage{amsmath}
\usepackage{amsthm}
\usepackage{latexsym}
\usepackage{graphicx}
\usepackage{color}
\usepackage{hyperref}
\usepackage{ulem}
\usepackage{fancyhdr}
\usepackage{lastpage}
 \numberwithin{equation}{section}

\newtheorem{Theorem}{Theorem}[section]
\newtheorem{Lemma}{Lemma}[section]

\newtheorem{Proposition}{Proposition}[section]
\newtheorem{definition}{Definition}[section]
\newtheorem{Remark}{Remark}[section]
\allowdisplaybreaks

\begin{document}

\begin{center}


        \vspace{1cm}
        \LARGE{\textbf{On the Vanishing Viscosity and Magnetic Diffusion Limit for Incompressible Magnetohydrodynamics}}
        \vspace{1cm}

        \large
        {\bf \v{S}\'{a}rka Ne\v{c}asov\'{a}}\\
        Institute of Mathematics of the Academy of Sciences of the Czech Republic,\\
        Zitn\'{a} 25, 11567, Praha 1, Czech Republic\\
        E - mail: matus@math.cas.cz\\
        \vspace{1cm}
       {\bf Tong Tang}\footnote{Corresponding author.}\\
        Department of Mathematics, College of Sciences\\
        Yangzhou University, Yangzhou 225000, China\\
        E - mail: tt0507010156@126.com\\
        \vspace{1cm}
        {\bf Yuanjing Tang}\\
        Department of Mathematics, College of Sciences\\
        Yangzhou University, Yangzhou 225000, China\\
        E - mail: Tyj18118851855@163.com\\
        \vspace{1cm}
        {\bf Lu Zhu}\\
        School of Mathematics and Laboratory of Mathematical
        Modeling and Intelligent Computing for Water Systems,
        Hohai University, Nanjing, P.R. China\\
        E - mail: zhulu@hhu.edu.cn\\
        \vspace{0cm}

        \rule{\textwidth}{1pt}
    \end{center}

	\begin{abstract}
	The article focuses on the inviscid and non-resistive limit of the incompressible magnetohydrodynamics (MHD) equations in a bounded domain. We derive sufficient conditions for the limit of Leray-Hopf solutions of the MHD system, which converges to the weak solutions of the ideal MHD system when $\mu,\nu\rightarrow0$, under the assumptions of uniform interior regularity and uniform equi-continuity at the boundary.

Therefore, we establish an analogue of the Kato-type criterion to show that the energy dissipation rate is independent of the coefficient of viscosity and resistivity.
Moreover, to analyze the different decay rates for the viscosity and resistivity, we obtain the limit of infinite and vanishing magnetic Prandtl number ($Pm=\frac{\mu}{\nu}$) for the incompressible MHD system. To the best of the authors' knowledge, this is the first rigorous result from a mathematical viewpoint on the limit of the magnetic Prandtl number.
	\end{abstract}

	\textbf{Mathematics Subject Classification:}35Q85, 76D09,76W05

	\vspace{0.2 cm}
	
	\textbf{Keywords:} inviscid viscosity and magnetic diffusion limit; magnetic Prandtl number; magnetohydrodynamics; Kato-type criterion.

\section{Introduction}
\hspace{1.5em}The incompressible magnetohydrodynamics (MHD) system is coupled by the Navier-Stokes equations and Maxwell equations, which describe the dynamics of electrically conducting fluids, such as astrophysical and laboratory plasmas or liquid metals. In this paper, we introduce two important MHD systems.
The first model describes incompressible fluids coupled with a magnetic field, which includes resistive diffusion:
\begin{equation}
\label{1.1}\begin{cases}
\partial_t \mathbf{u} + (\mathbf{u} \cdot \nabla )\mathbf{u} - \mu \Delta \mathbf{u} + \nabla P = (\mathbf{B} \cdot \nabla) \mathbf{B},\\[4pt]
\partial_t \mathbf{B} + (\mathbf{u} \cdot \nabla )\mathbf{B} - \nu \Delta \mathbf{B} = (\mathbf{B} \cdot \nabla) \mathbf{u},\\[4pt]
\nabla \cdot \mathbf{u} = \nabla \cdot \mathbf{B} = 0, \\[4pt]
(\mathbf{u}, \mathbf{B})|_{t=0} = (\mathbf{u}_0, \mathbf{B}_0).
\tag{1.1}
\end{cases}
\end{equation}
The system \eqref{1.1} is supplemented with the following boundary conditions:
\[
\mathbf{u}|_{\partial\Omega} = \mathbf{0}, \quad
\mathbf{B} \cdot \mathbf{n}|_{\partial\Omega} = 0, \quad
(\nabla \times \mathbf{B}) \times \mathbf{n}|_{\partial\Omega} = \mathbf{0},
\]
where \(\Omega \subset \mathbb{R}^3\) is a bounded domain with smooth boundary \(\partial\Omega\), and \(\mathbf{n}\) denotes the unit outward normal vector. Here, \(\mathbf{u} = \mathbf{u}(t, x)\) and \(\mathbf{B} = \mathbf{B}(t, x)\) are the velocity and magnetic fields, respectively, with \(t \in \mathbb{R}^+\) and \(x \in \Omega\). The scalar pressure is denoted by \(P = P(t, x)\), while \(\mu > 0\) and \(\nu > 0\) represent the fluid viscosity and magnetic resistivity, respectively. The dissipative terms \(\mu \Delta \mathbf{u}\) and \(\nu \Delta \mathbf{B}\) account for viscous damping of fluid motion and resistive diffusion of the magnetic field.

Next, we turn to the ideal MHD system (inviscid)  as $\mu, \nu \to 0$ in \eqref{1.1}:
\[
\begin{cases}
\partial_t \mathbf{u} + (\mathbf{u} \cdot \nabla )\mathbf{u} + \nabla P = (\mathbf{B} \cdot \nabla )\mathbf{B}, \\[4pt]
\partial_t \mathbf{B} + (\mathbf{u} \cdot \nabla )\mathbf{B} = (\mathbf{B} \cdot \nabla )\mathbf{u}, \\[4pt]
\nabla \cdot \mathbf{u} = \nabla \cdot \mathbf{B} = 0, \\[4pt]
(\mathbf{u}, \mathbf{B})|_{t=0} = (\mathbf{u}_0, \mathbf{B}_0),
\tag{1.2}\label{1.2}
\end{cases}
\]
and we supplement the corresponding boundary conditions as:
\[
\mathbf{u} \cdot \mathbf{n}|_{\partial\Omega} = 0, \quad
\mathbf{B} \cdot \mathbf{n}|_{\partial\Omega} = 0.
\]

Turbulence is one of the most challenging problems in classical physics. Since the pioneering K41 theory, people realized that turbulence is intimately connected to anomalous energy dissipation. Tremendous experimental observations and numerical simulations have proved Kolmogorov's prediction that the energy dissipation rate is independent of the viscosity at high Reynolds numbers. Therefore, it is a fundamental open problem to prove the Navier-Stokes equations converge to inviscid Euler equations at the infinite Reynolds number limit, that is, the so-called zero-dissipation limit or vanishing viscosity limit. Regarding the weak solutions, it is impossible to pass the limit due to the lack of compactness. A seminal breakthrough was achieved by Kato \cite{kato84}, who established a criterion for the inviscid limit based on the vanishing of energy dissipation within a boundary layer. Readers can refer to \cite{brkic24,maekawa14,mazz26,sam98} for details and background.  Compared with traditional statistical theory, Onsager proposed that the energy dissipation is governed by the regularity of the velocity instead of a random process, which is the celebrated Onsager conjecture. It states that the weak solutions of incompressible fluid conserve kinetic energy or experience anomalous dissipation with the threshold regularity $C^\alpha$.  The sufficient regularity condition guaranteeing energy conservation was analyzed from the perspective of rigorous mathematical proof by Eyink \cite{eyink94} at Fourier space, and extended to the Besov space by Constantin, E and Titi \cite{const94}. After that, it was extended to various cases of fluid, see \cite{bardos19,ca,feireisl17,kang07}. 
Under the assumption of interior $C^\alpha$ regularity and behavior near the boundary, Bardos, Titi, and Wiedemann \cite{bardos19} showed the Leray-Hopf solutions of the Navier-Stokes system converge to the weak solutions of Euler at the vanishing viscosity limit. Recently, Chen, Liang, and Wang \cite{chen22} showed the convergence from weak solutions of Navier-Stokes equations to Euler equations by constructing a new boundary layer foliation.



It is well known that turbulence in electrically conducting media has not yet been fully understood due to the presence of magnetic fields. In this vein, it is necessary to consider the MHD turbulence. Recently, Faraco and Lindberg \cite{faraco20} showed that magnetic helicity is conserved in simply connected domains in the limit ($\mu,\nu\rightarrow0$) under $L^2_tL^2_x$ integrability, while their weak limits of velocity and magnetic field do not satisfy the ideal MHD system. We should emphasize that the vanishing dissipation limit for the MHD system is not a straightforward corollary of the Navier-Stokes results as there are some essential difficulties and differences.  Precisely, the transition involves strongly coupled boundary layers that arose from the different boundary conditions: the no-slip condition for the velocity field versus the perfectly conducting wall condition for the magnetic field. On the other hand, if we consider the two different dissipation coefficients $\mu=\varepsilon^p$ and $\nu=\varepsilon^q$, they generate boundary layers of distinct thicknesses that are mutually coupled. Last but not least, the nonlinear interactions driven by the Lorentz force $(\mathbf{B} \cdot \nabla)\mathbf{B}$ and the magnetic induction $(\mathbf{u} \cdot \nabla)\mathbf{B}$ bring severe analytical difficulties.  For more results and physical explanation, see \cite{ca,faraco24,kang07,kozono89,lin15}.

\textbf{In this paper, we develop a new sufficient Kato-type criterion and show that the total energy dissipation rate is independent of the coefficient of viscosity and resistivity (see Theorem 1.1). Moreover, in contrast to \cite{faraco20}, we prove the weak limits of the Leray-Hopf solutions for the MHD system, which satisfy the ideal MHD equations and our methodology relies on two crucial elements: boundary layer foliation and variables Els\"asser.}

Firstly, motivated by \cite{chen22}, we construct a refined \textit{boundary layer foliation} for the MHD system, which is essential for handling the different scales of the dissipation coefficients. In order to decouple convective and magnetic stretching terms, we reformulate nonlinear terms by the \textit{Els\"asser variables}. More precisely, Els\"asser variables represent the symmetric combinations of the fluid velocity and magnetic fields. Then, applying a spatial localization and averaging procedure to these variables, we can analyze the combination of nonlinear fluxes at distinct foliations. Based on this strategy, we construct corresponding symmetric nonlinear functions (\eqref{3.58},\eqref{3.59},\eqref{3.63},\eqref{3.64}) for each foliation.   



Before stating our main result, we recall the definition of weak solutions for the dissipative MHD system. It is Sermange and Temam \cite{ser83} that established the global existence of Leray-Hopf weak solutions.

\begin{definition}
For any fixed \(\varepsilon > 0\) and initial data \((\mathbf{u}^\varepsilon_0, \mathbf{B}^\varepsilon_0) \in L^2_\sigma(\Omega)\)
\footnote{ Here and afterward, we define
$L^2_\sigma = \{ \mathbf{v} \in L^2(\Omega) : \text{div}\,\mathbf{v} = 0, \left.\mathbf{v}\cdot\mathbf{n}\right|_{\partial\Omega} = 0 \}$,
$W^{1,2}_{\sigma} = \{ \mathbf{v} \in W^{1,2}(\Omega) : \text{div}\,\mathbf{v} = 0, \left.\mathbf{v}\cdot\mathbf{n}\right|_{\partial\Omega} = 0  \}$, $W^{1,2}_{0,\sigma} = \{ \mathbf{v} \in W_0^{1,2}(\Omega) : \text{div}\,\mathbf{v} = 0 \}$. }, a pair \((\mathbf{u^\varepsilon}, \mathbf{B^\varepsilon})\) is called a \emph{Leray-Hopf weak solution} to the dissipative MHD system (1.1) if
\begin{itemize}
     \item[(i)] \(\mathbf{u^\varepsilon}\in L^\infty((0,T);L^2_{\sigma}(\Omega))\cap L^2((0,T);W_{0,\sigma}^{1,2}(\Omega)),  \mathbf{B^\varepsilon}\in L^\infty((0,T);L^2_{\sigma}(\Omega))\cap L^2((0,T);W_{\sigma}^{1,2}(\Omega))\);
    \item[(ii)] \((\mathbf{u^\varepsilon}, \mathbf{B^\varepsilon})\) solves (1.1) in the sense of distributions;
    \item[(iii)] the global energy inequality holds for almost every \(t \in (0, T]\):
    \begin{equation}
     \frac{1}{2} \int_{\Omega} \left( |\mathbf{u}^\varepsilon(t)|^2 + |\mathbf{B}^\varepsilon(t)|^2 \right) dx + \int_0^t \int_{\Omega} \left( \mu |\nabla \mathbf{u}^\varepsilon|^2 + \nu |\nabla \mathbf{B}^\varepsilon|^2 \right) dx\,ds \le \frac{1}{2} \int_{\Omega} \left( |\mathbf{u}_0^\varepsilon|^2 + |\mathbf{B}_0^\varepsilon|^2 \right) dx.
     \tag{1.3}\label{1.3}
    \end{equation}
\end{itemize}
\end{definition}

As pointed out in \cite{bra}, the magnetic Prandtl number $P_{m}=\frac{\mu}{\nu}$ is a significant parameter to study the MHD turbulence. Here we assume
\begin{align*}
\mu = \varepsilon^p \quad \text{and} \quad \nu = \varepsilon^q, \quad \text{for some } p, q > 0,
\end{align*}
so that we could obtain $P_m$ goes to infinity or vanish when $\epsilon\rightarrow0$. Moreover, in order to control the boundary layer effects from the velocity and magnetic fields, we define the \textit{unified boundary layer thickness} \(\delta(\varepsilon)\) by
\[
\delta(\varepsilon) := \min\{\varepsilon^p, \varepsilon^q\}.
\]

Throughout this paper, for any \(h>0\), we denote the interior domain by \(\Omega^h := \{x \in \Omega : \operatorname{dist}(x, \partial\Omega) > h\}\) and the boundary layer region by \(\Gamma_h := \Omega \setminus \overline{\Omega}^h\).

We are now in a position to state our main theorem.
\begin{Theorem}
Let \(\Omega \subset \mathbb{R}^3\) be a bounded domain with \(C^2\) boundary and let \(T \in (0, \infty)\). For each \(\varepsilon > 0\), let \((\mathbf{u}^\varepsilon, \mathbf{B}^\varepsilon)\) be a Leray-Hopf weak solution of the dissipative MHD system with initial data \((\mathbf{u}_0^\varepsilon, \mathbf{B}_0^\varepsilon)\), and suppose that
\[
(\mathbf{u}_0^\varepsilon, \mathbf{B}_0^\varepsilon) \to (\mathbf{u}_0, \mathbf{B}_0) \quad \text{in } L^2(\Omega) \text{ as } \varepsilon \to 0+.
\tag{1.4}\label{1.4}
\]
Assume, in addition, that the sequence satisfies the uniform regularity bounds
\[
(\mathbf{u}^\varepsilon, \mathbf{B}^\varepsilon) \text{ is uniformly bounded in } L^3(0, T; B_{3}^{\alpha_1,\infty}(\Omega^\delta) \times B_{3}^{\alpha_2,\infty}(\Omega^\delta)),
\tag{1.5}\label{1.5}
\]
where the spatial H\"older exponents \(\alpha_1\) and \(\alpha_2\) satisfy the following regime-dependent thresholds:
\begin{equation}
\tag{1.6}\label{1.6}
\begin{array}{l}
\text{(i)}\ \alpha_1, \alpha_2 \in (\frac{5}{6} - \frac{q}{2p}, \frac{5}{6}),\ \text{if}\ p \ge q;\\
\text{(ii)}\ \alpha_1, \alpha_2 \in (\frac{5}{6} - \frac{p}{2q}, \frac{5}{6}),\  \text{if}\ p < q.
\end{array}
\end{equation}
Furthermore, suppose that on the boundary layer region \(\Gamma_{\delta(\varepsilon)}\), we have
\begin{equation}
\begin{cases}
\mathbf{u}^\varepsilon, \mathbf{B}^\varepsilon \text{ are uniformly bounded in } L^4(0, T; L^\infty(\Gamma_{4\delta})), \\[4pt]
P^\varepsilon \text{ is uniformly bounded in } L^2(0, T; L^\infty(\Gamma_{4\delta})).
\end{cases}\label{1.7}
\tag{1.7}
\end{equation}
If the total dissipation within the boundary layer vanishes in the limit,
\[
\lim_{\varepsilon \to 0+}
\int_0^T \int_{\Gamma_{4\delta}} \bigl( \varepsilon^p|\nabla \mathbf{u}^\varepsilon|^2 +\varepsilon^q|\nabla \mathbf{B}^\varepsilon|^2 \bigr) dxdt = 0,
\tag{1.8}\label{1.8}
\]
then the global viscous and resistive dissipation vanishes, i.e.,
\[
\lim_{\varepsilon \to 0+} \int_0^T \int_{\Omega} \bigl( \varepsilon^p|\nabla \mathbf{u}^\varepsilon|^2 + \varepsilon^q|\nabla \mathbf{B}^\varepsilon|^2 \bigr) dxdt = 0.
\tag{1.9}\label{1.9}
\]
Moreover, up to a subsequence, \((\mathbf{u}^\varepsilon, \mathbf{B}^\varepsilon)\) converges strongly in \(L^3(0, T; L^3(\Omega))\) to a weak solution of the ideal MHD equations.
\end{Theorem}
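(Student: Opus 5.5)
Following the approach of Bardos--Titi--Wiedemann and Chen--Liang--Wang, we will pass from a local energy balance to a global one. The local balance splits the domain into the boundary layer $\Gamma_{4\delta}$ and the interior $\Omega^{\delta}$. In the interior, the Besov regularity \eqref{1.5} controls the nonlinear flux through a commutator estimate written in Els\"asser variables. First we introduce $\mathbf{z}^\pm=\mathbf{u}^\varepsilon\pm\mathbf{B}^\varepsilon$. The nonlinearities become $(\mathbf{z}^\mp\cdot\nabla)\mathbf{z}^\pm$, and the dissipative part becomes $\frac{\mu+\nu}{2}\Delta\mathbf{z}^\pm+\frac{\mu-\nu}{2}\Delta\mathbf{z}^\mp$, so the two fluxes can be treated symmetrically.

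We then build a boundary-layer foliation. For a parameter $h\in(\delta,4\delta)$, take a cut-off $\phi_h$ with $\phi_h=1$ on $\Omega^{2h}$, $\phi_h=0$ on $\Gamma_h$ and $|\nabla\phi_h|\lesssim h^{-1}$. Test \eqref{1.1} against $\phi_h(\mathbf{u}^\varepsilon)_\ell$ and $\phi_h(\mathbf{B}^\varepsilon)_\ell$, where $(\cdot)_\ell$ is mollification at a scale $\ell\le h$, so that the mollified fields are defined on the support of $\phi_h$. Subtracting this from the global energy inequality \eqref{1.3} gives
\[
\int_0^T\!\!\int_\Omega(\varepsilon^p|\nabla\mathbf{u}^\varepsilon|^2+\varepsilon^q|\nabla\mathbf{B}^\varepsilon|^2)
\le \text{(initial-data error)}+\int_0^T\!\!\int_{\Gamma_{4\delta}}(\ldots)+R_{\rm comm}+R_{\rm visc}.
\]
The right-hand terms are controlled as follows.

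\emph{Boundary-layer terms.} The terms carrying $\nabla\phi_h$ are the cubic fluxes $|\mathbf{u}|^3$ and $|\mathbf{u}||\mathbf{B}|^2$ and the pressure term $P\mathbf{u}\cdot\nabla\phi_h$. Each is bounded using \eqref{1.7} by a quantity of the form $h^{-1}|\Gamma_{4\delta}|\,\|\cdot\|_{L^\infty}^3\lesssim\|\cdot\|^3_{L^4L^\infty}$, which is $O(1)$ and not small. To make it small, we use the boundary conditions $\mathbf{u}=0$ and $\mathbf{B}\cdot\mathbf{n}=0$, together with the Hardy/Poincar\'e estimate $|\mathbf{u}\cdot\mathbf{n}|\lesssim d(x)\,|\nabla\mathbf{u}|$ in the normal direction. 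We average over $h\in(\delta,4\delta)$, choosing a foliation leaf where the flux is minimal. Then Cauchy--Schwarz bounds these terms by
\[
\|\mathbf{u},\mathbf{B}\|_{L^4L^\infty}\,\|P\|_{L^2L^\infty}\,\delta^{1/2}\Big(\int\!\!\int_{\Gamma_{4\delta}}|\nabla\mathbf{u}|^2\Big)^{1/2}\sim \Big(\delta\,\varepsilon^{-p}\cdot\varepsilon^p\!\int\!\!\int_{\Gamma_{4\delta}}|\nabla\mathbf{u}|^2\Big)^{1/2}.
\]
Since $\delta\le\varepsilon^p$ and $\delta\le\varepsilon^q$, this tends to zero by \eqref{1.8}. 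The magnetic terms are handled the same way using the tangential trace and $\mathbf{B}\cdot\mathbf{n}=0$. The viscous terms $\varepsilon^p\nabla\mathbf{u}\cdot\nabla\phi_h\,\mathbf{u}$ on $\Gamma_{4\delta}$ are handled analogously.

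\emph{Interior commutator.} The flux $\int\phi_h[(\mathbf{z}^\mp\otimes\mathbf{z}^\pm)_\ell-\mathbf{z}^\mp_\ell\otimes\mathbf{z}^\pm_\ell]:\nabla\mathbf{z}^\pm_\ell$ is bounded by the Constantin--E--Titi estimate by $\ell^{\alpha_1+\alpha_2+\min\alpha-1}$ times the $L^3B^{\alpha}_{3,\infty}$ norms. The viscous remainder $\varepsilon^p\int|\nabla\mathbf{u}_\ell|^2\lesssim\varepsilon^p\ell^{2\alpha-2}\,\delta^{-?}$ has to vanish as well. Choosing $\ell=\delta^{\theta}$, with the dependence on $p$ and $q$ through $\delta=\min\{\varepsilon^p,\varepsilon^q\}$, is exactly where the window \eqref{1.6} should come from. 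I expect this balancing to be the main obstacle. The three requirements are that $\ell\le h\sim\delta$, that $\varepsilon^{\max(p,q)}\ell^{2\alpha-2}\to0$ for the smaller dissipation coefficient measured against the larger-scale layer, and that the Besov domain $\Omega^\delta$ cost is absorbed. I would first try $\ell=\delta$ and check that this reproduces the lower bound $\frac56-\frac{q}{2p}$, and adjust $\theta$ if it does not. The upper bound $\alpha<\frac56$ is only needed so that the result is nontrivial and not covered by Onsager conservation.

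\emph{Convergence.} Once \eqref{1.9} is established, the uniform Besov bounds give interior equicontinuity in space. Equicontinuity in time follows from the equation in a negative Sobolev space, and the boundary layer has vanishing measure while the fields stay bounded by \eqref{1.7}. Together these give strong $L^3(0,T;L^3(\Omega))$ compactness via Aubin--Lions/Fr\'echet--Kolmogorov. Passing to the limit in the weak formulation with divergence-free test functions tangent to the boundary, and using that the dissipative terms vanish by \eqref{1.9}, yields a weak solution of \eqref{1.2} with $\mathbf{u}\cdot\mathbf{n}=\mathbf{B}\cdot\mathbf{n}=0$.
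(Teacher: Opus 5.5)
There is a genuine gap, and it is at the step you flag as the main obstacle: with a single mollification scale $\ell\le h\sim\delta$, the window \eqref{1.6} cannot be reached. The resolved dissipation $\varepsilon^{p}\int_0^T\int\phi_h|\nabla\mathbf{u}^\varepsilon_\ell|^2+\varepsilon^{q}\int_0^T\int\phi_h|\nabla\mathbf{B}^\varepsilon_\ell|^2$ must tend to zero. It enters the resolved balance with the same sign as the true dissipation, so without this, comparing with \eqref{1.3} only gives $\varepsilon^p\|\nabla\mathbf{u}^\varepsilon\|_{L^2}^2+\varepsilon^q\|\nabla\mathbf{B}^\varepsilon\|_{L^2}^2\le\varepsilon^p\|\nabla\mathbf{u}^\varepsilon_\ell\|_{L^2}^2+\varepsilon^q\|\nabla\mathbf{B}^\varepsilon_\ell\|_{L^2}^2+o(1)$, which is vacuous. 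This term lives on a set of measure $O(1)$, so by \eqref{2.4} it is of size $\varepsilon^{\min(p,q)}\ell^{2\alpha-2}\ge\varepsilon^{\min(p,q)}\delta^{2\alpha-2}$. The binding coefficient is the larger one, $\varepsilon^{\min(p,q)}$, not the smaller one as you suggest. For $p\ge q$ this quantity is $\varepsilon^{q+2p(\alpha-1)}$, which vanishes only if $\alpha>1-\frac{q}{2p}$. So your trial $\ell=\delta$ gives the threshold $1-\frac{q}{2p}$, not $\frac56-\frac{q}{2p}$. No choice of $\theta$ repairs this: $\theta>1$ makes the exponent worse, and $\theta<1$ violates $\ell\le h$. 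The range $(\frac56-\frac{q}{2p},1-\frac{q}{2p}]$ is lost, and for $p\ge 3q$ the whole window is lost, since then $1-\frac{q}{2p}\ge\frac56$. Your scheme is exactly the paper's $N=1$ case in \eqref{2.12} and \eqref{2.14}.

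The missing idea is a mollification scale that shrinks as one approaches $\partial\Omega$. The paper splits $\Omega$ into strips $V_n$ of width $\sim\delta^{\beta_{n-1}}$ and mollifies at scale $\delta^{\beta_n}$ on $V_n$, with $0=\beta_0<\cdots<\beta_N=1$. The dissipation error on $V_n$ is then bounded by $\varepsilon^q\delta^{2\beta_n(\alpha-1)+\frac13\beta_{n-1}}$. The factor $\delta^{\beta_{n-1}/3}=\mathrm{meas}(V_n)^{1/3}$ comes from H\"older's inequality against the $L^3$-based Besov bound. Requiring this error to vanish is the recursion \eqref{2.18}, whose fixed point $\frac{3}{5-6\sigma}\frac{q}{p}$ exceeds $1$ exactly when $\sigma>\frac56-\frac{q}{2p}$. (Equivalently, with dyadic layers $\operatorname{dist}(x,\partial\Omega)\sim 2^{-j}$ mollified at scale $2^{-j}$, one gets $\varepsilon^q\sum_{2^{-j}\ge\delta} 2^{j(\frac53-2\alpha)}\sim\varepsilon^q\delta^{2\alpha-\frac53}$ for $\alpha<\frac56$. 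So $\frac56$ is an artifact of this summation, not an Onsager-type triviality threshold; that threshold would be $\frac13$.) Variable scales also require the partition of unity to satisfy $\nabla(\xi_n+\xi_{n+1})^2=0$ on overlaps, together with $\overline{f}_n=\overline{f}_{n+1}$ there (\eqref{2.23}, \eqref{2.25}). This makes every term carrying $\nabla\xi_n\sim\delta^{-\beta_n}$ cancel exactly; see \eqref{3.23}, \eqref{3.27}--\eqref{3.28} and \eqref{3.46}. Averaging over cut-off levels $h\in(\delta,4\delta)$ is a foliation in the cut-off parameter only and supplies neither ingredient. Your Hardy-based treatment of the $\nabla\phi_h$ terms via \eqref{1.7}--\eqref{1.8}, the commutator bound and the compactness step are consistent with the paper. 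They cannot close the argument without the multi-scale regularization.
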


\begin{Remark}
 Faraco and Lindberg \cite{faraco20} considered the simultaneous vanishing viscosity and magnetic diffusion limit as $\mu,\nu\rightarrow0$, where we could not deduce the limit for magnetic Prandtl number. In comparison with \cite{faraco20}, we choose $\mu=\varepsilon^p$ and $\nu=\varepsilon^q$ with different decaying rate, so that we could obtain $P_m$ goes to infinity or vanish when $\epsilon\rightarrow0$. To the best knowledge of the authors, this is the first rigorous result from mathematical viewpoint on the limit of magnetic Prandtl number.
\end{Remark}
The remainder of the paper is organized as follows. Section 2 collects the necessary mathematical preliminaries, including commutator estimates, pressure estimates, and the boundary layer foliation. Section 3 is devoted to the proof of Theorem 1.1. In this section, we employ Els\"asser variables to decouple the system and analyze the vanishing of the nonlinear energy flux. Finally, we provide a brief conclusion in Section 4.
\section{Preliminaries}
\subsection{Commutator and pressure estimates}
\hspace{1.5em}Recall the standard mollification of a function \(f\):
\begin{equation}\label{2.1}
\overline{f}_\tau(x) := \int_{B_\tau(0)} f(x - y)\eta_\tau(y)\,dy, \qquad \forall \, x \in \Omega^\tau,
\end{equation}
where \(\eta_\tau\) is the standard mollifier of width \(\tau\).
We first recall the definition of the Besov space \(B^{\alpha,\infty}_p(\Omega)\) for \(p \in [1, \infty]\) and \(\alpha \in (0, 1)\) with the norm:
\begin{equation}\label{2.2}
\|f\|_{B^{\alpha,\infty}_p(\Omega)} := \|f\|_{L^p(\Omega)} + [f]_{B^{\alpha,\infty}_p(\Omega)},
\end{equation}
where the H\"older-type semi-norm is defined as
\begin{equation}\label{2.3}
[f]_{B^{\alpha,\infty}_p(\Omega)} := \sup_{|y|>0} \frac{\|f(\cdot + y) - f(\cdot)\|_{L^p(\Omega \cap (\Omega - y))}}{|y|^\alpha}.
\end{equation}
And for any \(f \in B_r^{\alpha,\infty}(\Omega)\) with \(r \in [1,\infty]\), we have
\begin{equation}\label{2.4}
\|\nabla \overline{f}_\tau\|_{L^r(\Omega^\tau)} \leq C \tau^{\alpha-1} \|f\|_{B_r^{\alpha,\infty}(\Omega)},
\end{equation}
and
\begin{equation}\label{2.5}
\|\overline{f}_\tau - f\|_{L^r(\Omega^\tau)} \leq C\tau^{\alpha} \|f\|_{B_r^{\alpha,\infty}(\Omega)}.
\end{equation}

\begin{Lemma}[\cite{chen22}] \label{lem:commutator}
Let \( f \in B_{r_1}^{\alpha,\infty}(\Omega) \) and \( g \in B_{r_2}^{\alpha,\infty}(\Omega) \) with exponents satisfying \( 1 \leq r, r_1, r_2 < \infty \) and
\begin{equation}\label{2.6}
\frac{1}{r_1} + \frac{1}{r_2} = \frac{1}{r}.
\end{equation}
Then we have the following estimate:
\begin{equation}\label{2.7}
\| (f \otimes g)_{\tau} - \bar{f}_{\tau} \otimes \bar{g}_{\tau} \|_{L^r(\Omega^{\tau})} \leq C \tau^{2\alpha} \| f \|_{B_{r_1}^{\alpha,\infty}(\Omega)} \| g \|_{B_{r_2}^{\alpha,\infty}(\Omega)},
\end{equation}
where the constant \( C \) is independent of \( \tau \).
\end{Lemma}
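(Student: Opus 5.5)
The plan is to follow the classical Constantin--E--Titi argument: rewrite the commutator as a sum of terms built only from increments of \(f\) and \(g\), and estimate each term by the Besov seminorm \eqref{2.3} together with the mollifier bound \eqref{2.5}.

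The algebraic starting point is the identity, valid pointwise for \(x\in\Omega^\tau\) (so that \(x-y\in\Omega\) whenever \(|y|<\tau\)),
\[
(f\otimes g)_\tau(x)-\bar f_\tau(x)\otimes\bar g_\tau(x)
=\int_{B_\tau(0)}\bigl(f(x-y)-f(x)\bigr)\otimes\bigl(g(x-y)-g(x)\bigr)\eta_\tau(y)\,dy
-\bigl(\bar f_\tau-f\bigr)(x)\otimes\bigl(\bar g_\tau-g\bigr)(x).
\]
To verify it, expand the product inside the integral and use \(\int\eta_\tau=1\). This gives
\[
(f\otimes g)_\tau - f\otimes\bar g_\tau - \bar f_\tau\otimes g + f\otimes g .
\]
Subtracting \((\bar f_\tau-f)\otimes(\bar g_\tau-g)\) from this leaves exactly \((f\otimes g)_\tau-\bar f_\tau\otimes\bar g_\tau\). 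This step is routine.

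For the first term I would apply Minkowski's integral inequality in \(L^r(\Omega^\tau)\), and then H\"older's inequality with \(1/r=1/r_1+1/r_2\) from \eqref{2.6}. This bounds it by
\[
\int_{B_\tau(0)}\|f(\cdot-y)-f\|_{L^{r_1}(\Omega^\tau)}\,\|g(\cdot-y)-g\|_{L^{r_2}(\Omega^\tau)}\,\eta_\tau(y)\,dy .
\]
Since \(\Omega^\tau\subset\Omega\cap(\Omega+y)\) for \(|y|<\tau\), each increment is controlled by the seminorm \eqref{2.3} applied with shift \(-y\). That gives the factors \(|y|^\alpha[f]_{B^{\alpha,\infty}_{r_1}}\) and \(|y|^\alpha[g]_{B^{\alpha,\infty}_{r_2}}\). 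Using \(|y|^{2\alpha}\le\tau^{2\alpha}\) on the support of \(\eta_\tau\) and \(\int\eta_\tau=1\), the first term is at most \(\tau^{2\alpha}[f][g]\).

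For the second term I would use H\"older's inequality again and then \eqref{2.5} with \(r=r_1\) and \(r=r_2\) respectively. This gives a bound of \(C\tau^{\alpha}\|f\|_{B^{\alpha,\infty}_{r_1}}\cdot C\tau^\alpha\|g\|_{B^{\alpha,\infty}_{r_2}}\). Adding the two estimates yields \eqref{2.7}, with \(C\) depending only on the mollifier.

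The only point needing care, which I regard as the mild obstacle, is the domain bookkeeping. One has to check that every translate \(x-y\) stays in \(\Omega\) for \(x\in\Omega^\tau\) and \(|y|<\tau\), so that the seminorm on \(\Omega\cap(\Omega-y)\), taken over all shifts, legitimately controls the increments. The same check is implicit in the proof of \eqref{2.5} on \(\Omega^\tau\). Finiteness of \(r_1,r_2\) is used only so that Minkowski and H\"older apply in their standard form.
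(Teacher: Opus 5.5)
Your proof is correct. The commutator identity, followed by Minkowski's inequality, H\"older's inequality with $1/r=1/r_1+1/r_2$, the increment bound from \eqref{2.3} and the mollifier bound \eqref{2.5}, is the standard Constantin--E--Titi argument, which is what \cite{chen22} uses. The paper itself only cites that lemma and gives no proof of its own. Your domain check, $\Omega^\tau\subset\Omega\cap(\Omega+y)$ for $|y|<\tau$, is exactly the point that needs verifying, and nothing further is required.
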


To control the pressure gradient error within the boundary layer, we recall the following a priori estimate for the pressure:

\begin{Lemma}[\cite{esc17,ne}]\label{l2}
Let \(p \in (1, \infty)\). Suppose that \(\mathbf{u}^\varepsilon, \mathbf{B}^\varepsilon \in L^{2p}(\Omega)\) and \(P^\varepsilon|_{\partial\Omega} \in L^p(\partial\Omega)\). It follows that \(P^\varepsilon \in L^p(\Omega)\), and the following estimate holds:
\begin{equation}\label{2.8}
\|P^\varepsilon\|_{L^p(\Omega)} \leq C \big( \|P^\varepsilon|_{\partial\Omega}\|_{L^p(\partial\Omega)} + \|\mathbf{u}^\varepsilon\|_{L^{2p}(\Omega)}^2 + \|\mathbf{B}^\varepsilon\|_{L^{2p}(\Omega)}^2 \big).
\end{equation}
\end{Lemma}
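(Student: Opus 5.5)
The plan is to write an elliptic problem for the pressure and estimate $P^\varepsilon$ by a single duality argument. Taking the divergence of the momentum equation in \eqref{1.1} and using $\nabla\cdot\mathbf{u}^\varepsilon=\nabla\cdot\mathbf{B}^\varepsilon=0$ removes both $\partial_t\mathbf{u}^\varepsilon$ and $\varepsilon^p\Delta\mathbf{u}^\varepsilon$. At the fixed time under consideration this gives, in the sense of distributions,
\[
-\Delta P^\varepsilon=\operatorname{div}\operatorname{div}\mathbb{F}^\varepsilon \ \text{ in }\Omega,\qquad \mathbb{F}^\varepsilon:=\mathbf{u}^\varepsilon\otimes\mathbf{u}^\varepsilon-\mathbf{B}^\varepsilon\otimes\mathbf{B}^\varepsilon,\qquad P^\varepsilon=P^\varepsilon|_{\partial\Omega}\ \text{on }\partial\Omega .
\]
By H\"older's inequality $\|\mathbb{F}^\varepsilon\|_{L^p}\le \|\mathbf{u}^\varepsilon\|_{L^{2p}}^2+\|\mathbf{B}^\varepsilon\|_{L^{2p}}^2$, so it suffices to prove $\|P\|_{L^p}\le C(\|P|_{\partial\Omega}\|_{L^p(\partial\Omega)}+\|\mathbb{F}\|_{L^p})$ for (very weak) solutions of this Dirichlet problem. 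Such an estimate is what \cite{esc17,ne} provide.

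\textbf{Duality setup.} Fix $\varphi\in L^{p'}(\Omega)$ and let $w$ solve $-\Delta w=\varphi$ in $\Omega$ with $w=0$ on $\partial\Omega$. Since $\partial\Omega\in C^2$, the Agmon--Douglis--Nirenberg $L^{p'}$ theory gives $\|w\|_{W^{2,p'}(\Omega)}\le C\|\varphi\|_{L^{p'}}$. The trace theorem then gives $\|\partial_{\mathbf n}w\|_{L^{p'}(\partial\Omega)}\le C\|w\|_{W^{2,p'}}$.

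\textbf{Green's identity.} Green's formula, with $w|_{\partial\Omega}=0$, gives
\[
\int_\Omega P\varphi\,dx=-\int_\Omega P\Delta w\,dx=-\int_{\partial\Omega}P\,\partial_{\mathbf n}w\,dS-\int_\Omega w\,\Delta P\,dx .
\]
Next substitute $-\Delta P=\operatorname{div}\operatorname{div}\mathbb{F}$ and integrate by parts twice. The first boundary term contains $w$ and vanishes. The second contains $\nabla w\cdot\mathbb{F}\mathbf n$, which also vanishes: $\mathbf{u}^\varepsilon\otimes\mathbf{u}^\varepsilon\,\mathbf n=0$ because $\mathbf{u}^\varepsilon=0$ on $\partial\Omega$, and $\mathbf{B}^\varepsilon\otimes\mathbf{B}^\varepsilon\,\mathbf n=\mathbf{B}^\varepsilon(\mathbf{B}^\varepsilon\cdot\mathbf n)=0$. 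This yields
\[
\Big|\int_\Omega P\varphi\,dx\Big|\le \|P\|_{L^p(\partial\Omega)}\|\partial_{\mathbf n}w\|_{L^{p'}(\partial\Omega)}+\|\mathbb{F}\|_{L^p}\|\nabla^2w\|_{L^{p'}}\le C\big(\|P\|_{L^p(\partial\Omega)}+\|\mathbb{F}\|_{L^p}\big)\|\varphi\|_{L^{p'}} .
\]
Taking the supremum over $\varphi$ gives \eqref{2.8} and, simultaneously, $P^\varepsilon\in L^p(\Omega)$.

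\textbf{Main obstacle.} The hard step is justifying this identity. A priori $P^\varepsilon$ is only a distribution with an $L^p$ trace, and $\mathbb{F}^\varepsilon$ is merely in $L^p$. I would handle this by defining $P^\varepsilon$ as the unique very weak solution characterized by the identity above. To identify it with the actual pressure, I would approximate: mollify $\mathbb{F}^\varepsilon$ inside $\Omega$ with cut-offs that preserve $\mathbb{F}\mathbf n=0$, and approximate the boundary data by smooth data. For the approximations the classical Green identity holds. The estimate just proved is uniform in the approximation, so it passes to the limit. Uniqueness of very weak solutions, which again follows from the same duality, then identifies the limit with $P^\varepsilon$.
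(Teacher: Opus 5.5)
The paper gives no proof of this lemma beyond citing Escauriaza--Montaner and Ne\v{c}as for elliptic $L^p$ theory. Your duality argument is the standard way to derive such a bound from that theory, and the formal computation is correct.

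You also caught something the statement leaves implicit. The boundary term $\int_{\partial\Omega}\nabla w\cdot\mathbb F^\varepsilon\mathbf n\,dS=\int_{\partial\Omega}\partial_{\mathbf n}w\,\bigl((\mathbf u^\varepsilon\cdot\mathbf n)^2-(\mathbf B^\varepsilon\cdot\mathbf n)^2\bigr)dS$ vanishes only because of the boundary conditions, and this is essential, not cosmetic. On the unit ball take $\mathbb F_k=\psi_k(|x|)\,\frac{x}{|x|}\otimes\frac{x}{|x|}$ with $\psi_k(1)=1$ and $\psi_k$ supported in $[1-1/k,1]$. The solutions of $-\Delta P_k=\operatorname{div}\operatorname{div}\mathbb F_k$ with $P_k|_{\partial\Omega}=0$ satisfy $\int P_k\varphi=-\int_{\partial\Omega}\partial_{\mathbf n}w\,dS+\int\mathbb F_k:\nabla^2w\to\int\varphi$. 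Hence $P_k\rightharpoonup 1$ while $\|\mathbb F_k\|_{L^p}\to0$, so $L^{2p}$ integrability of $\mathbf u^\varepsilon,\mathbf B^\varepsilon$ alone cannot give the bound.

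The genuine gap is in how you propose to justify the identity. Mollifying $\mathbb F^\varepsilon$ with interior cut-offs and smoothing the boundary datum produces, in the limit, a very weak solution $\tilde P$ obeying the estimate. But uniqueness of very weak solutions identifies $\tilde P$ with the MHD pressure only if you already know that $P^\varepsilon$ satisfies the very weak identity, which is exactly the Green identity you set out to justify. The argument is therefore circular: $P^\varepsilon-\tilde P$ is harmonic, and nothing you wrote shows its boundary values vanish. Moreover, compactly supported approximants satisfy $\mathbb F_k\mathbf n=0$ automatically. So the boundary conditions, which by the example above are what make the estimate true, never enter your approximation; they must enter in the identification.

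The repair is to run Green's formula on the actual solution for a.e.\ $t$:
\begin{itemize}
\item $\mathbf u^\varepsilon\in W^{1,2}_0$ and $\mathbf B^\varepsilon\in W^{1,2}$ with $\mathbf B^\varepsilon\cdot\mathbf n=0$ give $\mathbb F^\varepsilon\in W^{1,3/2}$ with $\mathbf n\cdot\mathbb F^\varepsilon\mathbf n=0$ in the trace sense.
\item The hypothesis that $P^\varepsilon$ has an $L^p(\partial\Omega)$ trace already presupposes Sobolev regularity of $P^\varepsilon$. An example is the $L^r_tL^s_x$ integrability of $\partial_t\mathbf u^\varepsilon,\nabla^2\mathbf u^\varepsilon,\nabla P^\varepsilon$ on $(t_0,T)$ that Stokes maximal regularity provides.
\item Write $\int P^\varepsilon\Delta w=-\int\nabla P^\varepsilon\cdot\nabla w+\int_{\partial\Omega}P^\varepsilon\partial_{\mathbf n}w$ and insert the momentum equation for $\nabla P^\varepsilon$.
\end{itemize}
Since $w$ is not compactly supported, taking the divergence in the sense of distributions does not by itself remove $\partial_t\mathbf u^\varepsilon$ and $\varepsilon^p\Delta\mathbf u^\varepsilon$; you must check that they pair to zero with $\nabla w$. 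The first does because $\mathbf u^\varepsilon$ is solenoidal with zero normal trace. For the second, integrating by parts twice (using $\operatorname{div}\mathbf u^\varepsilon=0$) leaves $\int_{\partial\Omega}\bigl(n_j\partial_ju_i\,\partial_iw-n_i\partial_ju_i\,\partial_jw\bigr)dS$, where $u_i$ are the components of $\mathbf u^\varepsilon$. This cancels because $\nabla w=(\partial_{\mathbf n}w)\mathbf n$ on $\partial\Omega$.
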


Furthermore, to handle the nonlinear terms near the physical boundary, we utilize a standard Hardy-type inequality as the following:

\begin{Lemma}[\cite{kufner77}]
Let \(p \in [1, \infty)\) and \(f \in W_0^{1,p}(\Omega)\) such that the following inequality holds:
\begin{equation}\label{2.9}
\left\| \frac{f(x)}{\operatorname{dist}(x, \partial\Omega)} \right\|_{L^p(\Omega)} \leq C \|\nabla f\|_{L^p(\Omega)},
\end{equation}
where the constant \(C\) depends on \(p\) and \(\Omega\).
\end{Lemma}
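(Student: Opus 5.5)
The statement just above is the Hardy inequality \eqref{2.9}, which the paper quotes from \cite{kufner77} as a standard tool. I would prove it by reducing to a one-dimensional Hardy inequality in the normal direction near $\partial\Omega$, and handling the interior trivially.

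\textbf{Density and interior.} By density of $C_c^\infty(\Omega)$ in $W_0^{1,p}(\Omega)$, and Fatou's lemma for the left-hand side, it suffices to treat $f\in C_c^\infty(\Omega)$. Write $d(x)=\operatorname{dist}(x,\partial\Omega)$. Since $\partial\Omega$ is $C^2$, there is $h_0>0$ such that on $\Gamma_{h_0}$ the map $(y,s)\mapsto y-s\mathbf{n}(y)$, $y\in\partial\Omega$, $s\in(0,h_0)$, is a $C^1$ diffeomorphism. In these coordinates $d=s$, and the Jacobian $J(y,s)$ is bounded above and below by positive constants. On $\Omega^{h_0}$ we have $d\ge h_0$, so $\|f/d\|_{L^p(\Omega^{h_0})}\le h_0^{-1}\|f\|_{L^p(\Omega)}\le C\|\nabla f\|_{L^p(\Omega)}$ by the Poincaré inequality on the bounded domain $\Omega$.

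\textbf{Boundary layer.} For fixed $y\in\partial\Omega$, set $g(s)=f(y-s\mathbf{n}(y))$. Then $g(0)=0$ and $|g'(s)|\le|\nabla f|$. For $p>1$, the one-dimensional Hardy inequality $\int_0^{h_0}|g(s)/s|^p\,ds\le C_p\int_0^{h_0}|g'(s)|^p\,ds$ with $C_p=(p/(p-1))^p$ follows by writing $g(s)=\int_0^s g'$ and applying the Hardy averaging operator bound (or Minkowski's integral inequality). Integrating over $y$ with respect to surface measure and using the Jacobian bounds gives $\|f/d\|_{L^p(\Gamma_{h_0})}\le C\|\nabla f\|_{L^p(\Gamma_{h_0})}$. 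Adding the two regions proves \eqref{2.9}.

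\textbf{Main obstacle: the endpoint $p=1$.} The one-dimensional Hardy inequality fails for $p=1$, so the argument above only covers $p>1$. Indeed, the genuine inequality is false in general for $p=1$. Since \eqref{2.9} is only invoked in the paper with $p=2$, I would establish the inequality for $p\in(1,\infty)$ and note this restriction on the stated range $[1,\infty)$. The remaining points, the tubular coordinates and the density argument, are routine.
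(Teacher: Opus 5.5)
Your argument is correct for $1<p<\infty$ and is the standard proof. The paper gives no argument of its own here, only the citation \cite{kufner77}.

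Your caveat at the endpoint is also right, and it is easy to make concrete. Take $d(x)=\operatorname{dist}(x,\partial\Omega)$ and $f_\eta=\min\{d/\eta,1\}\in W^{1,1}_0(\Omega)$. Then $\|\nabla f_\eta\|_{L^1(\Omega)}=\eta^{-1}\operatorname{meas}(\Gamma_\eta)\le C$, whereas $\int_\Omega f_\eta/d\,dx\ge\int_{\{\eta<d<h_0\}}d^{-1}\,dx\ge c\log(h_0/\eta)\to\infty$ as $\eta\to0$. So \eqref{2.9} is false for $p=1$, and the range should read $p\in(1,\infty)$.

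Your route also gives more than the bare citation. The one-dimensional step uses only $g(0)=0$, and its constant $(p/(p-1))^p$ does not depend on the length of the interval. Hence your boundary-layer computation actually yields the localized bound $\|f/d\|_{L^p(\Gamma_h)}\le C\|\nabla f\|_{L^p(\Gamma_h)}$, uniformly in $0<h\le h_0$.

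That strip version is what the paper actually invokes in \eqref{3.41}, \eqref{3.16} and \eqref{3.72}, in order to land on the boundary-layer dissipation \eqref{1.8}. The global \eqref{2.9} would not suffice there, since its right-hand side is $\|\nabla f\|_{L^p(\Omega)}$. The strip version still requires $f=0$ on $\partial\Omega$. So it covers $\mathbf{u}^\varepsilon$, but not $\mathbf{B}^\varepsilon$, which only satisfies $\mathbf{B}^\varepsilon\cdot\mathbf{n}=0$.
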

\subsection{Boundary layer foliation}
\label{subsec:viscous_estimation}
\hspace{1.5em}   We introduce the boundary layer foliation, which plays an important role in the proof. Inspired by the work \cite{chen22}, we introduce the boundary layer foliation with an appropriate modification with respect of two boundary layers.  Let $\sigma = \min\{\alpha_1, \alpha_2\}$ denote the effective regularity index. For $\sigma \in\left(\dfrac{5}{6}-\min\left\{\dfrac{q}{2p},\dfrac{p}{2q}\right\}, \dfrac{5}{6}\right)$, we define the sequence
\begin{equation}\label{2.10}
\beta_0^* = 0 \quad \text{and} \quad \beta_n^* =
\begin{cases}
\displaystyle \frac{1}{2(1-\sigma)} \left( \frac{q}{p} + \frac{1}{3}\beta_{n-1}^* \right), & p \ge q, \\[10pt]
\displaystyle \frac{1}{2(1-\sigma)} \left( \frac{p}{q} + \frac{1}{3}\beta_{n-1}^* \right), & p < q.
\end{cases}
\quad , n=1,2,\dots,
\end{equation}
Clearly, \(\{\beta_n^*\}\) is bounded and strictly increasing, and
\begin{equation}\label{2.11}
\lim_{n \to \infty} \beta_n^* =
\begin{cases}
\displaystyle \frac{3 }{5-6\sigma}\cdot\frac{q}{p}>1, & p \ge q, \\[10pt]
\displaystyle \frac{3 }{5-6\sigma}\cdot\frac{p}{q}>1, & p < q.
\end{cases}
\end{equation}
Then we choose the integral number $N$ as the following:
\begin{align}
\label{2.12}&3q > p \ge q: \quad
N:=
\begin{cases}
\displaystyle N(\sigma), & \sigma \in \left( \frac{5}{6} - \frac{q}{2p}, 1 - \frac{q}{2p} \right], \\[8pt]
1, & \sigma \in \left( 1 - \frac{q}{2p}, \frac{5}{6} \right),
\end{cases} \\[20pt]
\label{2.13}&p \ge 3q: \quad
N:= N(\sigma), \quad \sigma \in \left( \frac{5}{6} - \frac{q}{2p}, \frac{5}{6} \right), \\[20pt]
\label{2.14}&3p > q > p: \quad
N:=
\begin{cases}
\displaystyle N(\sigma), & \sigma \in \left( \frac{5}{6} - \frac{p}{2q}, 1 - \frac{p}{2q} \right], \\[8pt]
1, & \sigma \in \left( 1 - \frac{p}{2q}, \frac{5}{6} \right),
\end{cases} \\[20pt]
\label{2.15}&q \ge 3p: \quad
N:= N(\sigma), \quad \sigma \in \left( \frac{5}{6} - \frac{p}{2q}, \frac{5}{6} \right),
\end{align}
such that
\begin{equation}\label{2.16}
0 = \beta_0^* < \beta_1^* < \beta_2^* < \cdots < \beta_{N-1}^* \le 1 < \beta_N^*.
\end{equation}

In light of \eqref{2.10}-\eqref{2.15}, we define an increasing sequence \(\{\beta_n\}_{n=1}^N\) such that
\begin{equation}\label{2.17}
0 = \beta_0 < \beta_1 < \beta_2 < \cdots < \beta_{N-1} < \beta_N := 1,
\end{equation}
satisfying the recursive condition
\begin{equation}\label{2.18}
\beta_n < \begin{cases}
\displaystyle \frac{1}{2(1-\sigma)} \left( \frac{q}{p} + \frac{1}{3}\beta_{n-1} \right), & p \ge q, \\[10pt]
\displaystyle \frac{1}{2(1-\sigma)} \left( \frac{p}{q} + \frac{1}{3}\beta_{n-1} \right), & p < q.
\end{cases}
\end{equation}

The purpose of introducing the sequence $\{\beta_n\}$ is to design a refined decomposition of the boundary layer. We utilize the unified boundary layer thickness $\delta(\varepsilon) = \min\{\varepsilon^p, \varepsilon^q\}$.
\begin{center}
    \includegraphics[width=0.5\textwidth]{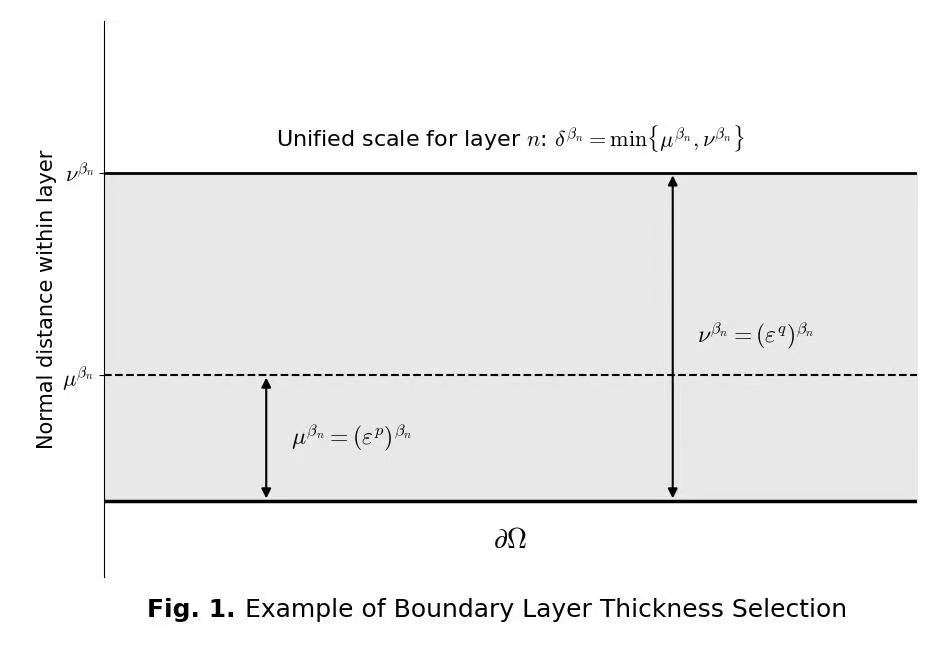}
\end{center}

And we partition the inner region of $\Omega$ into the following subsets:
\begin{equation}
\begin{aligned}\label{2.19}
&V_1 := \Omega^{2\delta^{\beta_1}}, \\
&V_n := \Omega^{2\delta^{\beta_n}} \setminus \Omega^{2\delta^{\beta_{n-1}} + 2\delta^{\beta_n}}, \quad \text{for } 2 \le n \le N,
\end{aligned}
\end{equation}
where
\begin{equation}\label{2.20}
V_{N+1} := \bigg( \bigcup_{n=1}^N V_n \bigg)^c \quad \text{and} \quad V_0 := \varnothing.
\end{equation}
 It is not difficult to check that, for $\varepsilon$ small enough, the Lebesgue measure of these layers and their intersections satisfy:
\begin{equation}\label{2.21}
\operatorname{meas} (V_n) \le C\delta^{\beta_{n-1}}, \quad \operatorname{meas} (V_k \cap V_m) \le
\begin{cases}
C\delta^{\min\{\beta_k,\beta_m\}} & \text{if } |k-m| = 1, \\
0 & \text{if } |k-m| > 1.
\end{cases}
\end{equation}

Subordinate to this decomposition $\{V_n\}_{n=1}^{N}$, we introduce a smooth partition of unity $\{\xi_n\}_{n=1}^{N}$ such that
\begin{equation}\label{2.22}
\operatorname{spt}\xi_n \subset V_n, \quad 0 \le \xi_n \le 1, \quad \sum_{n=1}^{N} \xi_n(x) = 1, \quad \forall x \in \Omega.
\end{equation}
{\bf A key property of this construction is that the gradient vanishes identically on overlapping regions:}
\begin{equation}\label{2.23}
\nabla(\xi_n + \xi_{n+1})^2 = 0 \quad \text{if } x \in V_n \cap V_{n+1}.
\end{equation}
Furthermore for \(n=1,2,\cdots,N\), define
\begin{equation}\label{2.24}
\overline{f}_n=\left\{
\begin{array}{ll}
\displaystyle\int \eta_{\delta^{\beta_n}}(x-y)f(y) dy, \quad & x\in V_n\cap
V^{c}_{n+1},\\
\displaystyle\int \eta_{\delta^{\beta_{n+1}}}(x-y)f(y) dy, \quad & x\in V_n\cap
V_{n+1}.
\end{array}
\right.
\end{equation}
From the construction of the localized regularization, we know that the mollified functions coincide on the same overlaps:
\begin{equation}\label{2.25}
\overline{f}_n = \overline{f}_{n+1} \quad \text{on} \quad V_n \cap V_{n+1}.
\end{equation}

\begin{center}
    \includegraphics[width=0.5\textwidth]{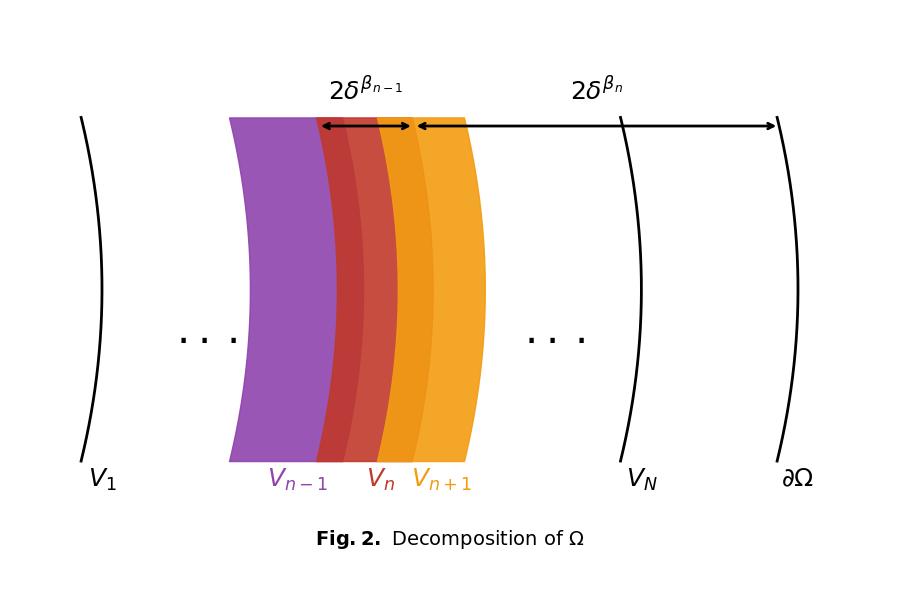}
\end{center}
\begin{Remark}
From geometric viewpoint, the motivation of \eqref{2.19} is to decompose the boundary layer into a sequence of infinitesimal thin, overlapping strips which are parallel to \(\partial\Omega\). It is interesting to observe that such type of decomposition allows locally regularization at each \(V_n\), when these strips approach the boundary. It prevents the mollified functions "escape" outside the boundary \(\partial\Omega\).
\end{Remark}
\begin{Remark}
In contrast with \cite{chen22}, we need to modify the construction when applying the foliation to the MHD system,
due to the presence of the velocity field and magnetic field. In the Navier-Stokes system,  scientists define a sole boundary layer for the velocity field that arises from viscous dissipation. Here, we need to define a unified layer thickness \(\delta(\varepsilon) := \min\{\varepsilon^p, \varepsilon^q\}\) to ensure that both \(\mathbf u\) and \(\mathbf B\) can be mollified at the same foliation. On the other hand, because there are two distinct boundary layers associated with different thicknesses, namely \(\mu \sim \varepsilon^p\) and \(\nu \sim \varepsilon^q\), we need to modify the construction of the sequence $\{\beta_k\}$ to control decay rates.
Last but not least, the coupling terms, like \(\overline{\mathbf{u}^{\varepsilon}}_m \otimes \overline{\mathbf{B}^{\varepsilon}}_m\) and \(\overline{\mathbf{B}^{\varepsilon}}_m \otimes \overline{\mathbf{u}^{\varepsilon}}_m\) in \eqref{3.55}, generate strong nonlinear interactions which brings the essential difficulty to show the convergence of error terms.
\end{Remark}

\section{Proof of Theorem 1.1}
\hspace{1.5em}\noindent Using the boundary layer foliation constructed in Section 2, for each \(N \in \mathbb{N}\) we define the globally regularized fields by
\begin{equation}\label{3.1}
\tilde{\mathbf{u}}^\delta(x,t) = \sum_{n=1}^N \xi_n(x) \overline{\mathbf{u}^{\varepsilon}}_n(x,t), \qquad
\tilde{\mathbf{B}}^\delta(x,t) = \sum_{n=1}^N \xi_n(x) \overline{\mathbf{B}^{\varepsilon}}_n(x,t),
\end{equation}
where $\overline{\mathbf{u}^{\varepsilon}}_n$ and \(\overline{\mathbf{B}^{\varepsilon}}_n\) denote the mollification of \((\mathbf u^\varepsilon, \mathbf B^\varepsilon)\) on the scale \(\delta^{\beta_n}\) associated with the subdomain \(V_n\) defined in \eqref{2.24}. {\bf It should be noted that the mollification defined in \eqref{2.24} differs from that in \eqref{2.1}.}

From (1.1), we have
\begin{equation}
\label{3.2}\left\{
\begin{array}{l}
\partial_t \tilde{\mathbf{u}}^\delta(x,t)  + \sum\limits_{n=1}^N \xi_n \operatorname{div}\overline{(\mathbf{u}^\varepsilon \otimes \mathbf{u}^\varepsilon)}_n-\varepsilon^p \sum\limits_{n=1}^N \xi_n \Delta \overline{\mathbf{u}^{\varepsilon}}_n+ \sum\limits_{n=1}^N \xi_n \nabla\overline{P^{\varepsilon}}_n= \sum\limits_{n=1}^N \xi_n \operatorname{div}\overline{(\mathbf{B}^\varepsilon \otimes \mathbf{B}^\varepsilon)}_n , \\
\partial_t \tilde{\mathbf{B}}^\delta(x,t)  + \sum\limits_{n=1}^N \xi_n \operatorname{div}\overline{(\mathbf{u}^\varepsilon \otimes \mathbf{B}^\varepsilon)}_n-\varepsilon^q \sum\limits_{n=1}^N \xi_n \Delta \overline{\mathbf{B}^{\varepsilon}}_n= \sum\limits_{n=1}^N \xi_n \operatorname{div}\overline{(\mathbf{B}^\varepsilon \otimes \mathbf{u}^\varepsilon)}_n.
\end{array}
\right.
\end{equation}
To localize our analysis away from the physical boundary, we introduce a smooth cut-off function \(\theta(x)\) in \(\Omega\) satisfying
\[
0 \le \theta(x) \le 1,\quad
\theta(x)=1\ \text{for}\ x\in\Omega^{4\delta},\quad
\theta(x)=0\ \text{for}\ x\notin\Omega^{2\delta}\quad\text{and}\quad
|\nabla\theta| \le 4\delta^{-1}.
\]
We test \(\eqref{3.2}_1\) against \(\theta\tilde{\mathbf{u}}^\delta\) and \(\eqref{3.2}_2\) against \(\theta\tilde{\mathbf{B}}^\delta\), then sum and integrate over \(\Omega\times[0,T]\) to obtain the energy equality
\begin{align}\label{3.3}
&\frac{1}{2} \int_\Omega \theta \left( |\tilde{\mathbf{u}}^\delta(x,T)|^2 + |\tilde{\mathbf{B}}^\delta(x,T)|^2 \right) dx
- \frac{1}{2} \int_\Omega \theta \left( |\tilde{\mathbf{u}}^\delta(x,0)|^2 + |\tilde{\mathbf{B}}^\delta(x,0)|^2 \right) dx \notag \\
= &\int_0^T \int_\Omega \theta \Bigl( \sum_{n=1}^N \xi_n \overline{\mathbf{u}^{\varepsilon}}_n \Bigr) \cdot \Bigl( \sum_{n=1}^N \xi_n \varepsilon^p \Delta \overline{\mathbf{u}^{\varepsilon}}_n \Bigr) dx dt  \notag \\
&+ \int_0^T \int_\Omega \theta \Bigl( \sum_{n=1}^N \xi_n \overline{\mathbf{B}^{\varepsilon}}_n \Bigr) \cdot \Bigl( \sum_{n=1}^N \xi_n \varepsilon^q \Delta \overline{\mathbf{B}^{\varepsilon}}_n \Bigr) dx dt \notag \\
&- \int_0^T \int_\Omega \theta \Bigl( \sum_{n=1}^N \xi_n \overline{\mathbf{u}^{\varepsilon}}_n \Bigr) \cdot \Bigl( \sum_{n=1}^N \xi_n \nabla \overline{P^{\varepsilon}}_n \Bigr) dx dt \notag \\
&+ \int_0^T \int_\Omega \theta \Bigl( \sum_{n=1}^N \xi_n \overline{\mathbf{u}^{\varepsilon}}_n \Bigr) \cdot \left(\sum_{n=1}^N \xi_n \operatorname{div} \left(\overline{\left(\mathbf{B}^\varepsilon \otimes \mathbf{B}^\varepsilon\right)}_n-\overline{\left(\mathbf{u}^\varepsilon \otimes \mathbf{u}^\varepsilon\right)} _n\right) \right) dx dt  \notag \\
&+ \int_0^T \int_\Omega \theta \Bigl( \sum_{n=1}^N \xi_n \overline{\mathbf{B}^{\varepsilon}}_n \Bigr) \cdot \left( \sum_{n=1}^N \xi_n \operatorname{div} \left(\overline{\left(\mathbf{B}^\varepsilon \otimes \mathbf{u}^\varepsilon\right)}_n-\overline{\left(\mathbf{u}^\varepsilon \otimes \mathbf{B}^\varepsilon\right)}_n \right) \right) dx dt = \sum_{i=1}^{5} R_i.
\end{align}

In order to prove the main theorem, we need to show that all the terms on the right-hand side of \eqref{3.3} converge to the corresponding weak form of the ideal MHD equations as \(\varepsilon \to 0\). We proceed by analyzing these terms in the subsequent propositions.

\vspace{1em}

\begin{Proposition}[Vanishing of the dissipation error]
Under the assumptions of Theorem 1.1, the dissipation error terms associated with the velocity and magnetic fields vanish in the limit, i.e.,
\begin{equation}\label{3.4}
\lim_{\varepsilon\to0+}\bigl(|R_1|+|R_2|\bigr)=0.
\end{equation}
\end{Proposition}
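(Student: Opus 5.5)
We estimate $R_1$ in detail; $R_2$ is handled the same way with $\varepsilon^p$ replaced by $\varepsilon^q$ and $\mathbf u^\varepsilon$ by $\mathbf B^\varepsilon$. The only difference is that $\mathbf B^\varepsilon$ does not vanish on $\partial\Omega$. This causes no trouble, because $\theta$ is supported in $\Omega^{2\delta}$ and no boundary traces ever appear.

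\textbf{Step 1: integrate by parts.} Since $\theta\xi_m$ is compactly supported in $\Omega^{2\delta}$, we move one derivative from $\Delta\overline{\mathbf u^\varepsilon}_n$ and obtain
\begin{align*}
R_1=-\varepsilon^p\sum_{m,n}\int_0^T\!\!\int_\Omega \theta\,\xi_m\xi_n\,\nabla\overline{\mathbf u^\varepsilon}_m:\nabla\overline{\mathbf u^\varepsilon}_n\,dxdt
-\varepsilon^p\sum_{m,n}\int_0^T\!\!\int_\Omega \overline{\mathbf u^\varepsilon}_m\otimes\nabla(\theta\xi_m\xi_n):\nabla\overline{\mathbf u^\varepsilon}_n\,dxdt .
\end{align*}
By \eqref{2.21}, only the pairs with $|m-n|\le1$ contribute.

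\textbf{Step 2: the quadratic gradient term.} Mollification does not increase $L^2$ norms, since $\|\nabla\overline{f}_n\|_{L^2(V_n)}\le\|\nabla f\|_{L^2}$ on the relevant enlarged set. The first sum is therefore bounded by
\[
C\varepsilon^p\int_0^T\!\!\int_\Omega|\nabla\mathbf u^\varepsilon|^2\,dxdt .
\]
This is not small by itself, so we split according to the support.

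On the part of the support inside the interior region $\Omega^{\delta}$ (or $\Omega^{\delta^{\beta_n}}$), we use \eqref{2.4} with $r=3$ and Hölder in space. On a set of measure $O(1)$ this gives
\[
\varepsilon^p\|\nabla\overline{\mathbf u}_n\|_{L^2}^2\le C\varepsilon^p\delta^{2\beta_n(\sigma-1)}\|\mathbf u^\varepsilon\|_{B_3^{\sigma,\infty}}^2 .
\]
Since $\delta\ge\min(\varepsilon^p,\varepsilon^q)$ and $\beta_n\le1$, the exponent is at least $p-2\max(p,q)\beta_n(1-\sigma)$. The recursion \eqref{2.18}, together with $\sigma>\frac56-\frac{q}{2p}$, is designed to make this power of $\varepsilon$ positive up to $\beta_N=1$. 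The measure factor $\delta^{\beta_{n-1}/3}$ coming from $\operatorname{meas}(V_n)$ via Hölder ($L^3\to L^2$) supplies the extra $\frac13\beta_{n-1}$ in the recursion. We then integrate in time using $L^3_t\subset L^2_t$.

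For the layers closest to the boundary, where the mollification scale is $\delta$ itself, we use instead the fact that the mollification at scale $\delta$ only samples $\Gamma_{4\delta}$. There the bound is by
\[
\varepsilon^p\int_0^T\!\!\int_{\Gamma_{4\delta}}|\nabla\mathbf u^\varepsilon|^2\,dxdt ,
\]
which tends to $0$ by the assumption \eqref{1.8}.

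\textbf{Step 3: the commutator term with $\nabla(\theta\xi_m\xi_n)$.} Away from the boundary, $|\nabla\xi_n|\lesssim\delta^{-\beta_n}$ on overlaps of measure $\delta^{\beta_n}$. Using \eqref{2.23} on overlaps and Hölder with \eqref{2.4}, this part of the term is bounded by
\[
\varepsilon^p\delta^{-\beta_n}\delta^{\beta_n(\sigma-1)}\delta^{\beta_{n-1}/3}\,\|\mathbf u\|_{B_3^{\sigma,\infty}}^2 ,
\]
which is again small by the recursion.

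The contribution of $\nabla\theta$, of size $\delta^{-1}$, lives on $\Gamma_{4\delta}\setminus\Gamma_{2\delta}$ of measure $\lesssim\delta$. Here we use the $L^4_tL^\infty$ bound \eqref{1.7} on $\mathbf u^\varepsilon$ and Cauchy–Schwarz:
\[
\varepsilon^p\delta^{-1}\|\mathbf u\|_{L^\infty}\delta^{1/2}\|\nabla\overline{\mathbf u}\|_{L^2(\Gamma_{4\delta})}\le C(\varepsilon^p\delta^{-1})^{1/2}\Bigl(\varepsilon^p\int_0^T\!\!\int_{\Gamma_{4\delta}}|\nabla\mathbf u^\varepsilon|^2\,dxdt\Bigr)^{1/2}.
\]
The last factor tends to $0$ by \eqref{1.8}, and $\varepsilon^p\delta^{-1}\le1$ when $\delta=\varepsilon^p$. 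The obstacle is the regime where $\delta=\varepsilon^q<\varepsilon^p$ for the $\mathbf B$-term (or the reverse case for the $\mathbf u$-term). There the prefactor $\varepsilon^q\delta^{-1}$ can be large, and we must exploit the $\varepsilon^{q/p}$ gain built into $\beta_n^*$, possibly by using the Hardy inequality \eqref{2.9} for $\mathbf u^\varepsilon$ instead of $L^\infty$.

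Verifying the exponent bookkeeping in this mismatched-scale regime is the main step I expect to be delicate.
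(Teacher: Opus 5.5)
Step 1 and the Besov part of Step 2 match the paper's treatment of $R_{21}$. There is, however, a genuine gap in Step 3, in the term carrying $\nabla(\xi_m\xi_n)$. You bound it by size, getting $\varepsilon^p\delta^{-\beta_n(2-\sigma)+\frac13\beta_{n-1}}$, and call this small by the recursion. It is not. \eqref{2.18} only guarantees $2\beta_n(1-\sigma)<\frac qp+\frac13\beta_{n-1}$ (resp.\ $\frac pq$), which is calibrated for two mollified gradients, each costing $\delta^{\beta_n(\sigma-1)}$. A cutoff gradient instead costs a full $\delta^{-\beta_n}$, an extra $\delta^{-\sigma\beta_n}$. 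Concretely, take $p=q$ and $\alpha_1=\alpha_2=\sigma$. By \eqref{2.12} there are overlaps ($N\ge2$) only when $\sigma\le\frac12$, and then at $n=N$ your exponent is $p\bigl(\sigma-1+\frac13\beta_{N-1}\bigr)<-\frac p6$, so the bound diverges. Using the true overlap width $\sim\delta^{\beta_{n+1}}$ does not help: at $n+1=N$ one still gets $\delta^{\sigma-\frac23}$. The missing idea is that this term needs no estimate at all. $\nabla\xi_n$ is supported only on the overlaps, and on $V_k\cap V_{k+1}$ the localized mollifications coincide by \eqref{2.25}. So the pairs $(k,k)$, $(k+1,k+1)$, $(k,k+1)$, $(k+1,k)$ combine into $\theta\,\nabla\bigl((\xi_k+\xi_{k+1})^2\bigr)\otimes\overline{\mathbf u^\varepsilon}_{k+1}:\nabla\overline{\mathbf u^\varepsilon}_{k+1}$, which vanishes identically by \eqref{2.23}. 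This is the paper's computation \eqref{3.23}, giving $R_{23}=0$.

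Second, you leave the $\nabla\theta$ term open exactly where it matters: the mismatched regime ($R_1$ with $p<q$, $R_2$ with $p>q$), where your $L^\infty$ route via \eqref{1.7} carries the unbounded factor $(\varepsilon^p\delta^{-1})^{1/2}$ (resp.\ $(\varepsilon^q\delta^{-1})^{1/2}$). The fix does not come from the $\beta_n$, since only the outermost layer, mollified at scale $\delta$, meets $\operatorname{spt}\nabla\theta$. It comes from the lower thresholds in \eqref{1.6}. The paper splits the undifferentiated factor as $\overline{\mathbf B^\varepsilon}_N=(\overline{\mathbf B^\varepsilon}_N-\mathbf B^\varepsilon)+\mathbf B^\varepsilon$. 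It pairs each piece by Cauchy--Schwarz with $\bigl(\varepsilon^q\int_0^T\!\int_\Omega|\nabla\overline{\mathbf B^\varepsilon}_N|^2\bigr)^{1/2}\le C$, which holds by \eqref{1.3}. By \eqref{2.5}, the commutator piece gives $(\varepsilon^q\delta^{2\alpha_2-\frac53})^{1/2}$. This vanishes precisely because $\alpha_2>\frac56-\frac q{2p}$ when $p\ge q$ (and $\alpha_2>\frac13$ when $p<q$). For $R_1$ the analogue is $\varepsilon^p\delta^{2\alpha_1-\frac53}$ with $\alpha_1>\frac56-\frac p{2q}$. For the other piece, Hardy's inequality \eqref{2.9} trades $|\nabla\theta|\lesssim\operatorname{dist}(x,\partial\Omega)^{-1}$ for a gradient, so no factor $\varepsilon^q\delta^{-1}$ appears and \eqref{1.8} concludes. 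This step is legitimate for $\mathbf u^\varepsilon\in W^{1,2}_0$. For $\mathbf B^\varepsilon$, which only satisfies $\mathbf B^\varepsilon\cdot\mathbf n=0$, \eqref{2.9} does not cover the tangential components, and the paper applies it there without comment. So your remark that the missing trace ``causes no trouble'' holds for the integration by parts but not here. Finally, in Step 2 the layer $V_N\cap\operatorname{spt}\theta$ reaches distance about $2\delta^{\beta_{N-1}}$ from $\partial\Omega$, far outside $\Gamma_{4\delta}$, so \eqref{1.8} cannot be invoked for it. The paper uses the Besov bound for every $k$, including $k=N$; this is exactly what the admissibility of $\beta_N=1$ in \eqref{2.18} provides.
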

\begin{proof}
Observing that \(R_1\) and \(R_2\) share the same mathematical structure,
we only present the proof for \(R_2\) below.
Although the argument for \(R_2\) is similar to \cite[Lemma 3.1]{chen22},
the boundary layer scales for the velocity field and the magnetic field are distinct in MHD system,
which requires a subtle analysis on the convergence of several terms.
For completeness, we prove the process in the following,
which relies on the local orthogonality property of the partition of unity \(\{\xi_n\}\):
\begin{equation}
\label{3.5}
\xi_k\xi_m=0\quad\text{whenever }|k-m|\ge2.
\end{equation}
Recalling \eqref{2.21} and integrating by parts then gives
\begin{align}\label{3.6}
\varepsilon^q&\int_0^T\int_{\Omega}\theta\,\Bigl(\sum_{n=1}^N\xi_n\overline{\mathbf{B}^{\varepsilon}}_n\Bigr)\cdot\Bigl(\sum_{n=1}^N\xi_n\Delta\overline{\mathbf{B}^{\varepsilon}}_n\Bigr)\,dx\,dt \notag\\
&=\varepsilon^q\sum_{k,m=1}^N\int_0^T\int_{\Omega}\theta\,\xi_k\xi_m\,\overline{\mathbf{B}^{\varepsilon}}_k\cdot\Delta\overline{\mathbf{B}^{\varepsilon}}_m\,dx\,dt \notag\\
&= -\varepsilon^q\sum_{|k-m|\le1}\int_0^T\int_{\Omega}\theta\,\xi_k\xi_m\,\bigl(\nabla\overline{\mathbf{B}^{\varepsilon}}_k\bigr) : \bigl(\nabla\overline{\mathbf{B}^{\varepsilon}}_m\bigr)\,dx\,dt \notag\\
 &\quad -\varepsilon^q\sum_{|k-m|\le1}\int_0^T\int_{\Omega}\xi_k\xi_m\, \bigl(\nabla\theta\otimes\overline{\mathbf{B}^{\varepsilon}}_k\bigr) : \bigl(\nabla\overline{\mathbf{B}^{\varepsilon}}_m\bigr)dx\,dt \notag\\
&\quad -\varepsilon^q\sum_{|k-m|\le1}\int_0^T\int_{\Omega}
\theta\bigl(\nabla(\xi_k\xi_m)\otimes\overline{\mathbf{B}^{\varepsilon}}_k\bigr) : \bigl(\nabla\overline{\mathbf{B}^{\varepsilon}}_m\bigr)\,dx\,dt \notag\\
&= R_{21}+R_{22}+R_{23}.
\end{align}
\begin{itemize}
    \item \textbf{Estimate for $R_{21}$}
\end{itemize}

Due to the \eqref{2.19}, when \(|k-m|=0\), we have
\begin{align}\label{3.7}
& \left|\varepsilon^q \int_0^T \int_\Omega \theta \xi_k^2 \bigl(\nabla\overline{\mathbf{B}^{\varepsilon}}_k\bigr) : \bigl(\nabla\overline{\mathbf{B}^{\varepsilon}}_m\bigr) \, dxdt \right|\notag \\
&= \left|\varepsilon^q \int_0^T \left( \int_{V_k \cap V_{k+1}} + \int_{V_k \cap V_{k+1}^c} \right) \theta \xi_k^2 \left| \nabla \overline{\mathbf{B}^{\varepsilon}}_k \right|^2 dxdt \right| \notag\\
&\le C \varepsilon^q \int_0^T \left(  \left\| \nabla \overline{\mathbf{B}^{\varepsilon}}_k \right\|_{L^3(V_k \cap V_{k+1})}^2 \left\| \xi_k^2 \right\|_{L^3(V_k \cap V_{k+1})}+\left\| \nabla \overline{\mathbf{B}^{\varepsilon}}_k \right\|_{L^3(V_k \cap V_{k+1}^c)}^2  \left\| \xi_k^2 \right\|_{L^3(V_k \cap V_{k+1}^c)} \right) dt \notag\\
&\le C \varepsilon^q \left( \delta^{ 2\beta_{k+1}(\alpha_2-1)+\frac{1}{3}\beta_{k}} + \delta^{2\beta_k(\alpha_2-1)+\frac{1}{3}\beta_{k-1}} \right) \int_0^T \left\| \mathbf{B}^\varepsilon \right\|_{B_3^{\alpha_2,\infty}(\Omega^\delta)}^2 dt,
\end{align}
where we use \eqref{1.5}, \eqref{2.4}, \eqref{2.21} and \eqref{2.22}.

When \(|k-m|=1\), we use H\"older inequality and \eqref{2.4}-\eqref{2.5} to obtain.
\begin{align}\label{3.8}
& \left|\varepsilon^q \int_0^T \int_\Omega \theta \xi_k \xi_m \bigl(\nabla\overline{\mathbf{B}^{\varepsilon}}_k\bigr) : \bigl(\nabla\overline{\mathbf{B}^{\varepsilon}}_m\bigr) \, dxdt \right| \notag\\
=& \varepsilon^q \left|\int_0^T \int_\Omega \theta \xi_k \xi_{k+1} \bigl(\nabla\overline{\mathbf{B}^{\varepsilon}}_k\bigr) : \bigl(\nabla\overline{\mathbf{B}^{\varepsilon}}_{k+1}\bigr)  dxdt + \int_0^T \int_\Omega \theta \xi_k \xi_{k-1} \bigl(\nabla\overline{\mathbf{B}^{\varepsilon}}_k\bigr) : \bigl(\nabla\overline{\mathbf{B}^{\varepsilon}}_{k-1}\bigr)  dxdt \right| \notag\\
\le& C \varepsilon^q \int_0^T\left\| \nabla \overline{\mathbf{B}^{\varepsilon}}_{k} \right\|_{L^3(V_k \cap V_{k+1})} \left\| \nabla \overline{\mathbf{B}^{\varepsilon}}_{k+1} \right\|_{L^3(V_k \cap V_{k+1})} \left\| \xi_k \xi_{k+1} \right\|_{L^3(V_k \cap V_{k+1})}dt \notag\\
&\quad + C \varepsilon^q \int_0^T \left\| \nabla \overline{\mathbf{B}^{\varepsilon}}_{k} \right\|_{L^3(V_k \cap V_{k-1})} \left\| \nabla \overline{\mathbf{B}^{\varepsilon}}_{k-1} \right\|_{L^3(V_k \cap V_{k-1})} \left\| \xi_k \xi_{k-1} \right\|_{L^3(V_k \cap V_{k-1})}dt \notag\\
\le& C \varepsilon^q \left( \delta^{2\beta_{k+1}(\alpha_2-1)+\frac{1}{3}\beta_k} + \delta^{2\beta_k(\alpha_2-1)+\frac{1}{3}\beta_{k-1}} \right) \int_0^T \left\| \mathbf{B}^\varepsilon \right\|_{B_3^{\alpha_2,\infty}(\Omega^\delta)}^2 dt.
\end{align}

Therefore, in order to obtain that \eqref{3.7} and \eqref{3.8} converge to zero, we need to show that $\varepsilon^q \delta^{2\beta_{k+1}(\alpha_2-1)+\frac{1}{3}\beta_k}$ vanishes as \(\varepsilon \to 0\), and both \eqref{3.7} and \eqref{3.8} converge to zero similarly. Since the values of $p$ and $q$ affect the choice of $\delta$, we distinguish two cases.

\medskip
\textbf{Case I:} Assume \(p \ge q\). Hence, we have \(\varepsilon^p \le \varepsilon^q\). Then we choose \(\delta = \varepsilon^p\), and get
\begin{equation}\label{3.9}
\varepsilon^q \cdot \delta^{2\beta_{k+1}(\alpha_2-1) + \frac{1}{3}\beta_k} = \varepsilon^{p\left(2\beta_{k+1}(\alpha_2-1) + \frac{1}{3}\beta_k + \frac{q}{p}\right)}.
\end{equation}
By the construction of the sequence \(\{\beta_k\}\) given in \eqref{2.18}, we have
\begin{equation}\label{3.10}
2\beta_{k+1}(\alpha_2-1) + \frac{1}{3}\beta_k + \frac{q}{p}
\ge 2\beta_{k+1}(\sigma-1) + \frac{1}{3}\beta_k + \frac{q}{p} > 0.
\end{equation}
Hence, as \(\varepsilon \to 0\), it follows that
\begin{equation}\label{3.11}
\varepsilon^{p\left(2\beta_{k+1}(\alpha_2-1) + \frac{1}{3}\beta_k + \frac{q}{p}\right)} \to 0.
\end{equation}

\textbf{Case II:} Assume \(p < q\). Hence, we have \(\varepsilon^p > \varepsilon^q\).  Choosing \(\delta = \varepsilon^q\) we obtain
\begin{equation}\label{3.12}
\varepsilon^q \cdot \delta^{2\beta_{k+1}(\alpha_2-1) + \frac{1}{3}\beta_k} = \varepsilon^{q\left(2\beta_{k+1}(\alpha_2-1) + \frac{1}{3}\beta_k + 1\right)}.
\end{equation}
From condition \eqref{2.18}, we have
\begin{equation}\label{3.13}
2\beta_{k+1}(\alpha_2-1) + \frac{1}{3}\beta_k + 1 > 2\beta_{k+1}(\sigma-1) + \frac{1}{3}\beta_k + \frac{p}{q} > 0.
\end{equation}
Therefore, as \(\varepsilon \to 0\), we conclude that
\begin{equation}\label{3.14}
\varepsilon^{q\left(2\beta_{k+1}(\alpha_2-1) + \frac{1}{3}\beta_k + 1\right)} \to 0.
\end{equation}

Combining the above estimates for each case, we conclude that
\begin{equation}\label{3.15}
\lim_{\varepsilon\to0+}R_{21}=0.
\end{equation}

\begin{itemize}
    \item \textbf{Estimate for $R_{22}$}
\end{itemize}

Recalling that $\nabla\theta = 0$ for $x\notin\Omega^{2\delta}\cap\Gamma_{4\delta}$,
and the definition of $V_N$, it is evident to observe that $V_N$ contains the region where $\nabla\theta\neq 0$,
so that the integral can be considered only over $V_N$.
For small $\delta$, we use \eqref{1.3}, \eqref{1.5}, \eqref{2.7} and \eqref{2.9} to obtain
\begin{align}\label{3.16}
\left|R_{22}\right|=&\left|\varepsilon^q\sum_{|k-m|\le1}
\int_0^T\int_{\Omega}\xi_k\xi_m\,
\bigl(\nabla\theta\otimes\overline{\mathbf{B}^{\varepsilon}}_k\bigr) :
\bigl(\nabla\overline{\mathbf{B}^{\varepsilon}}_m\bigr)dx\,dt\right|\notag\\
\le & \varepsilon^q \int_0^T \int_{\Omega^{2\delta} \cap \Gamma_{4\delta}} |\nabla\overline{\mathbf{B}^\varepsilon}_N | \, |\nabla\theta|\, \xi_N^2 \left|\overline{\mathbf{B}^\varepsilon}_N - \mathbf{B}^\varepsilon + \mathbf{B}^\varepsilon \right| dxdt \notag\\
\le & \varepsilon^q \int_0^T \int_{\Omega^{2\delta} \cap \Gamma_{4\delta}} |\nabla\overline{\mathbf{B}^\varepsilon}_N | \, |\nabla\theta|\, \xi_N^2\left| \overline{\mathbf{B}^\varepsilon}_N - \mathbf{B}^\varepsilon \right| dxdt + \varepsilon^q \int_0^T \int_{\Omega^{2\delta} \cap \Gamma_{4\delta}} |\nabla\overline{\mathbf{B}^\varepsilon}_N | \, \xi_N^2
|\nabla\theta| |\mathbf{B}^\varepsilon| dxdt \notag\\
\le & \left( \varepsilon^q \int_0^T \int_{\Omega^{2\delta} \cap \Gamma_{4\delta}} |\nabla\overline{\mathbf{B}^\varepsilon}_N |^2 dxdt \right)^{\frac{1}{2}}
\cdot \left( \varepsilon^q \int_0^T \int_{\Omega^{2\delta} \cap \Gamma_{4\delta}} |\nabla\theta|^2 \xi_N^4 \left|\overline{\mathbf{B}^\varepsilon}_N - \mathbf{B}^\varepsilon \right|^2 dxdt \right)^{\frac{1}{2}} \notag\\
&+ \left( \varepsilon^q \int_0^T \int_{\Omega^{2\delta} \cap \Gamma_{4\delta}} |\nabla\overline{\mathbf{B}^\varepsilon}_N |^2 dxdt \right)^{\frac{1}{2}} \cdot \left( \varepsilon^q \int_0^T \int_{\Omega^{2\delta} \cap \Gamma_{4\delta}} \xi_N^4 |\nabla\theta|^2\, |\mathbf{B}^\varepsilon|^2 dxdt \right)^{\frac{1}{2}} \notag\\
\le &\, C \left( \varepsilon^q \int_0^T \int_\Omega |\nabla\overline{\mathbf{B}^\varepsilon}_N |^2 dxdt \right)^{\frac{1}{2}} \cdot \left( \varepsilon^q \int_0^T \delta^{-2} \|\xi_N^4\|_{L^3(\Omega^{2\delta} \cap \Gamma_{4\delta})} \left\| \overline{\mathbf{B}^\varepsilon}_N - \mathbf{B}^\varepsilon \right\|_{L^3(\Omega^{2\delta} \cap \Gamma_{4\delta})}^2 dt \right)^{\frac{1}{2}} \notag\\
&\quad + \left( \varepsilon^q \int_0^T \int_\Omega |\nabla\overline{\mathbf{B}^\varepsilon}_N |^2 dxdt \right)^{\frac{1}{2}} \cdot \left( \varepsilon^q \int_0^T \left\| \nabla\theta \mathbf{B}^\varepsilon \right\|_{L^2(\Omega^{2\delta} \cap \Gamma_{4\delta})}^2 dt \right)^{\frac{1}{2}} \notag\\
\le&\, C\left( \varepsilon^q \int_0^T \int_\Omega |\nabla\overline{\mathbf{B}^\varepsilon}_N |^2 dxdt \right)^{\frac{1}{2}} \cdot \left( \varepsilon^q \int_0^T \delta^{2\alpha_2 - \frac{5}{3}} \left\| \mathbf{B}^\varepsilon \right\|_{B_3^{\alpha_2,\infty}(\Gamma_{4\delta})}^2 dt \right)^{\frac{1}{2}} \notag\\
&\quad + \left(\varepsilon^q \int_0^T \int_\Omega |\nabla \overline{\mathbf{B}^\varepsilon}_N |^2 dxdt  \right)^{\frac{1}{2}} \cdot \left( \varepsilon^q \int_0^T \int_{\Gamma_{4\delta}} |\nabla\overline{\mathbf{B}^\varepsilon}_N |^2 dt \right)^{\frac{1}{2}} \notag\\
\le&\, C \left[ \left( \varepsilon^q \cdot \delta^{2\alpha_2 - \frac{5}{3}} \right)^{\frac{1}{2}} + \left( \varepsilon^q \int_0^T \int_{\Gamma_{4\delta}} |\nabla\overline{\mathbf{B}^\varepsilon}_N |^2 dxdt \right)^{\frac{1}{2}} \right].
\end{align}
Now we show that both the quantity $\varepsilon^q \cdot \delta^{2\alpha_2 - \frac{5}{3}}$  tends to $0$ as $\varepsilon \to 0+$ by considering two cases as follows.
\medskip

\textbf{Case I ($p \ge q$):}
In this case, $\varepsilon^p \le \varepsilon^q$ and $\delta = \varepsilon^p$. Then we have
\begin{equation}\label{3.17}
\varepsilon^q \cdot \delta^{2\alpha_2 - \frac{5}{3}}
= \varepsilon^{q + p \left(2\alpha_2 - \frac{5}{3}\right)}.
\end{equation}
Since $\alpha_2 > \frac{5}{6} - \frac{q}{2p}$,  the exponent
\begin{equation}\label{3.18}
q + p \left(2\alpha_2 - \frac{5}{3}\right) > 0,
\end{equation}
which implies
\begin{equation}\label{3.19}
\lim_{\varepsilon\to 0+} \varepsilon^q \cdot \delta^{2\alpha_2 - \frac{5}{3}} = 0.
\end{equation}

\textbf{Case II ($p < q$):}
Here, $\varepsilon^p > \varepsilon^q$ and $\delta = \varepsilon^q$. Then we have
\begin{equation}\label{3.20}
\varepsilon^q \cdot \delta^{2\alpha_2 - \frac{5}{3}} = \varepsilon^{2q \left(\alpha_2 - \frac{1}{3}\right)}.
\end{equation}
It is obvious \(\alpha_2 > \frac{5}{6} - \frac{p}{2q}>\frac 1 3\) due to \(p<q\). Thus, we obtain
\begin{equation}\label{3.21}
\lim_{\varepsilon\to0+}\varepsilon^q \cdot \delta^{2\alpha_2 - \frac{5}{3}}=0.
\end{equation}
On the other hand, by assumption \eqref{1.8}, the term $\varepsilon^q \int_0^T \int_{\Gamma_{4\delta}} |\nabla \overline{\mathbf{B}^\varepsilon}_N |^2 dxdt$ tend to zero as $\varepsilon \to 0$. Therefore,
\begin{equation}\label{3.22}
\lim_{\varepsilon\to0+}R_{22}=0.
\end{equation}


\begin{itemize}
    \item \textbf{Estimate for $R_{23}$}
\end{itemize}

Finally, we consider the third term in \eqref{3.6}, which involves the gradient of the product $\xi_k\xi_m$. Using \eqref{2.23} and \eqref{2.24}, and according to the supports of the partition functions, we split the sum and rewrite it as
\begin{align}\label{3.23}
R_{23}=&\sum_{|k-m|\le 1}\varepsilon^q\int_0^T\int_{\Omega}\theta\bigl(\nabla(\xi_k\xi_m)\otimes\overline{\mathbf{B}^{\varepsilon}}_k\bigr) : \bigl(\nabla\overline{\mathbf{B}^{\varepsilon}}_m\bigr)\,dx\,dt\notag \\
=&\sum_{k=1}^{N}\varepsilon^q\int_0^T\Bigl(\int_{V_k\cap V_{k-1}}+\int_{V_k\cap V_{k+1}}\Bigr)
\theta\bigl(\nabla(\xi_k^2)\otimes\overline{\mathbf{B}^{\varepsilon}}_k\bigr) : \bigl(\nabla\overline{\mathbf{B}^{\varepsilon}}_k\bigr)\,dx\,dt \nonumber\\
&+\sum_{k=1}^{N-1}\varepsilon^q\int_0^T\int_{V_k\cap V_{k+1}}\theta\left[\bigl(\nabla(\xi_k\xi_{k+1})\otimes\overline{\mathbf{B}^{\varepsilon}}_k\bigr) : \bigl(\nabla\overline{\mathbf{B}^{\varepsilon}}_{k+1}\bigr)+\bigl(\nabla(\xi_k\xi_{k+1})\otimes\overline{\mathbf{B}^{\varepsilon}}_{k+1}\bigr) : \bigl(\nabla\overline{\mathbf{B}^{\varepsilon}}_k\bigr)
\right]\,dx\,dt \nonumber\\
=&\sum_{k=1}^{N}\varepsilon^q\int_0^T\Bigl(\int_{V_k\cap V_{k-1}}\theta\bigl(\nabla(\xi_k^2)\otimes\overline{\mathbf{B}^{\varepsilon}}_k\bigr) : \bigl(\nabla\overline{\mathbf{B}^{\varepsilon}}_k\bigr)\,dx
+\int_{V_k\cap V_{k+1}}\theta\bigl(\nabla(\xi_k^2)\otimes\overline{\mathbf{B}^{\varepsilon}}_{k+1}\bigr) : \bigl(\nabla\overline{\mathbf{B}^{\varepsilon}}_{k+1}\bigr)\,dx\Bigr)dt \nonumber\\
&+\sum_{k=1}^{N-1}\varepsilon^q\int_0^T\int_{V_k\cap V_{k+1}}2\theta\bigl(\nabla(\xi_k\xi_{k+1})\otimes\overline{\mathbf{B}^{\varepsilon}}_{k+1}\bigr) : \bigl(\nabla\overline{\mathbf{B}^{\varepsilon}}_{k+1}\bigr)\,dx\,dt\nonumber\\
=&\sum_{k=1}^{N-1}\varepsilon^q\int_0^T\int_{V_k\cap V_{k+1}}\theta\bigl(\nabla(\xi_{k+1}^2+\xi_k^2+2\xi_k\xi_{k+1})\otimes\overline{\mathbf{B}^{\varepsilon}}_{k+1}\bigr) : \bigl(\nabla\overline{\mathbf{B}^{\varepsilon}}_{k+1}\bigr)\,dx\,dt\nonumber\\
=&\sum_{k=1}^{N-1}\varepsilon^q\int_0^T\int_{V_k\cap V_{k+1}}\theta\left(\nabla\left((\xi_{k}+\xi_{k+1})^2\right)\otimes\overline{\mathbf{B}^{\varepsilon}}_{k+1}\right) : \bigl(\nabla\overline{\mathbf{B}^{\varepsilon}}_{k+1}\bigr)\,dx\,dt=0,
\end{align}
where \eqref{2.23} is used at the last step.

This completes the estimate for the magnetic error term \(R_2\).
An analogous argument, together with the viscous part of condition \eqref{1.8}, yields
\begin{equation}\label{3.24}
\lim_{\varepsilon\to0+}R_1=0.
\end{equation}

Therefore, we complete the proof of Proposition 3.1.
\end{proof}

\begin{Proposition}[Vanishing of Pressure]
Under the assumptions of Theorem 1.1, we have the pressure term vanishes in the limit as:
\begin{equation}\label{3.25}
\lim_{\varepsilon\to0+}R_3=0.
\end{equation}
\end{Proposition}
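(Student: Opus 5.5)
We prove that $R_3=-\int_0^T\int_\Omega\theta\,\tilde{\mathbf u}^\delta\cdot\bigl(\sum_n\xi_n\nabla\overline{P^\varepsilon}_n\bigr)\,dx\,dt$ tends to zero by following the scheme used for $R_{23}$ in Proposition 3.1. We integrate by parts to move the gradient off the pressure, and then exploit that mollification commutes with divergence, so that each $\overline{\mathbf u^\varepsilon}_n$ is divergence free where it is defined.

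\emph{Step 1: expansion and integration by parts.} Expand $R_3$ as a double sum
\[
R_3=-\sum_{|k-m|\le1}\int_0^T\int_\Omega\theta\,\xi_k\xi_m\,\overline{\mathbf u^\varepsilon}_k\cdot\nabla\overline{P^\varepsilon}_m\,dx\,dt .
\]
Only the terms with $|k-m|\le1$ survive, by \eqref{3.5}. On the support of $\xi_k\xi_m$ we use \eqref{2.25}: on $V_k\cap V_{k+1}$ both factors carry the same mollification scale, and off the overlaps $k=m$. Hence in every surviving term the velocity and pressure are mollified at the same scale, and we may write $\overline{\mathbf u^\varepsilon}$, $\overline{P^\varepsilon}$ for the common mollified functions. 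Integrating by parts and using $\operatorname{div}\overline{\mathbf u^\varepsilon}=0$ in $\Omega^{2\delta}\supset\operatorname{spt}\theta$ gives
\[
R_3=\sum_{|k-m|\le1}\int_0^T\int_\Omega\Bigl(\nabla\theta\,\xi_k\xi_m+\theta\,\nabla(\xi_k\xi_m)\Bigr)\cdot\overline{\mathbf u^\varepsilon}\,\overline{P^\varepsilon}\,dx\,dt=:R_{31}+R_{32}.
\]

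\emph{Step 2: the partition-of-unity term $R_{32}$.} Regroup $R_{32}$ over the overlaps exactly as in \eqref{3.23}. On $V_k\cap V_{k+1}$ the functions $\overline{\mathbf u^\varepsilon}_k,\overline{\mathbf u^\varepsilon}_{k+1}$ coincide, and likewise for the pressure, so the weights add up to $\nabla\bigl((\xi_k+\xi_{k+1})^2\bigr)$. This vanishes by \eqref{2.23}. Off the overlaps $\sum_n\xi_n^2=1$ locally, so the gradient vanishes there too. Thus $R_{32}=0$. We could equally observe directly that $\sum_{k,m}\xi_k\xi_m=(\sum_n\xi_n)^2=1$ wherever a single mollification is in use.

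\emph{Step 3: the cut-off term $R_{31}$ (main obstacle).} Only $\nabla\theta$ remains, and it is supported in $\Omega^{2\delta}\cap\Gamma_{4\delta}\subset V_N$, where $\xi_N=1$ and the mollification scale is $\delta$. So
\[
|R_{31}|\le C\delta^{-1}\int_0^T\int_{\Omega^{2\delta}\cap\Gamma_{4\delta}}|\overline{\mathbf u^\varepsilon}_N|\,|\overline{P^\varepsilon}_N|\,dx\,dt .
\]
A crude bound using only \eqref{1.7} gives $\delta^{-1}\cdot\delta\cdot\|\mathbf u^\varepsilon\|_{L^4L^\infty}\|P^\varepsilon\|_{L^2L^\infty}$, which is $O(1)$ and not small. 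We therefore need the boundary condition. The mollified velocity at points within distance $4\delta$ of the boundary only samples $\mathbf u^\varepsilon$ in $\Gamma_{5\delta}$. Since $\mathbf u^\varepsilon\in W^{1,2}_0$, the Hardy inequality \eqref{2.9} gives $\|\mathbf u^\varepsilon\|_{L^2(\Gamma_{5\delta})}\le C\delta\|\nabla\mathbf u^\varepsilon\|_{L^2(\Gamma_{5\delta})}$.

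We then apply H\"older with the pressure in $L^\infty$, using \eqref{1.7} (enlarging $4\delta$ to $5\delta$ harmlessly, or mollifying at scale $\delta/2$), and Young's inequality for mollifiers:
\[
|R_{31}|\le C\delta^{-1}|\Gamma_{4\delta}|^{1/2}\int_0^T\|P^\varepsilon\|_{L^\infty}\,\delta\,\|\nabla\mathbf u^\varepsilon\|_{L^2(\Gamma_{5\delta})}\,dt
\le C\delta^{1/2}\varepsilon^{-p/2}\Bigl(\varepsilon^p\!\int_0^T\!\!\int_{\Gamma_{5\delta}}|\nabla\mathbf u^\varepsilon|^2\Bigr)^{1/2}.
\]
Since $\delta\le\varepsilon^p$, the prefactor $\delta^{1/2}\varepsilon^{-p/2}$ is at most $1$. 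The remaining factor tends to zero by the Kato-type assumption \eqref{1.8}, so $R_{31}\to0$.

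The delicate points are twofold. First, the support of $\overline{\mathbf u^\varepsilon}_N$ near $\partial\Omega^{2\delta}$ must be matched with the region $\Gamma_{4\delta}$ controlled by \eqref{1.7}–\eqref{1.8}. Second, the power of $\delta$ must be tracked through the Hardy step; this bookkeeping is where the choice $\delta=\min\{\varepsilon^p,\varepsilon^q\}$ is used. If the layer widths do not match, I would fall back on Lemma \ref{l2} to control $P^\varepsilon$ in $L^2$ on the slightly larger strip.
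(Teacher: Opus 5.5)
Your proposal is correct and follows essentially the same route as the paper. You integrate by parts using $\operatorname{div}\overline{\mathbf u^\varepsilon}_n=0$, cancel the $\nabla(\xi_k\xi_m)$ terms via \eqref{2.23} and \eqref{2.25}, and bound the remaining $\nabla\theta$ term on $\Omega^{2\delta}\cap\Gamma_{4\delta}$ using a Hardy-type estimate, the $L^\infty$ pressure bound \eqref{1.7}, and the Kato condition \eqref{1.8} with $\delta\le\varepsilon^p$, which gives the same bound as \eqref{3.29}. Your explicit tracking of the sampled strip $\Gamma_{5\delta}$ and your use of the localized thin-layer form $\|\mathbf u^\varepsilon\|_{L^2(\Gamma_{5\delta})}\le C\delta\|\nabla\mathbf u^\varepsilon\|_{L^2(\Gamma_{5\delta})}$ (needed when $\delta=\varepsilon^p$, since the global form \eqref{2.9} alone only yields an $O(1)$ bound) make precise steps that the paper passes over in \eqref{3.29}.
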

\begin{proof}
Recalling incompressibility condition \(\operatorname{div}\overline{\mathbf{u}^{\varepsilon}}_n=0\), and integrating \(R_3\) by parts, we get
\begin{align}\label{3.26}
R_3
&= \int_0^T \int_\Omega \theta \Bigl( \sum_{n=1}^N \xi_n \overline{\mathbf{u}^{\varepsilon}}_n \Bigr) \cdot \Bigl( \sum_{n=1}^N \xi_n \nabla \overline{P^{\varepsilon}}_n \Bigr) dx dt\notag\\
&= \sum_{|k-m|\le1}\int_0^T\int_{\Omega}\theta\xi_k\xi_m\overline{\mathbf{u}^{\varepsilon}}_k\cdot\nabla\overline{P^\varepsilon}_m\,dx\,dt
= -\sum_{|k-m|\le1}\int_0^T\int_{\Omega}\operatorname{div}\bigl(\theta\xi_k\xi_m\overline{\mathbf{u}^{\varepsilon}}_k\bigr)\overline{P^\varepsilon}_m\,dx\,dt\notag\\
&= -\sum_{|k-m|\le1}\int_0^T\int_{\Omega}\xi_k\xi_m\nabla\theta\cdot\overline{\mathbf{u}^{\varepsilon}}_k\overline{P^\varepsilon}_m\,dx\,dt
  -\sum_{|k-m|\le1}\int_0^T\int_{\Omega}\theta\nabla(\xi_k\xi_m)\cdot\overline{\mathbf{u}^{\varepsilon}}_k\overline{P^\varepsilon}_m\,dx\,dt.
\end{align}

We first consider the case \(|k-m|=0\). Expanding the gradient of the product and using \eqref{2.23} yields
\begin{align}\label{3.27}
&\sum_{|k-m|=0}\int_0^T\int_{\Omega}
\xi_k\xi_m\nabla\theta\cdot\overline{\mathbf{u}^{\varepsilon}}_k\overline{P^\varepsilon}_m\,dx\,dt
+\sum_{|k-m|=0}\int_0^T\int_{\Omega}\theta\nabla(\xi_k\xi_m)\cdot\overline{\mathbf{u}^{\varepsilon}}_k\overline{P^\varepsilon}_m\,dx\,dt\notag\\
=&\sum_{k=1}^N\int_0^T\int_{V_k}\overline{P^\varepsilon}_k\xi_k^2\overline{\mathbf{u}^{\varepsilon}}_k\cdot\nabla\theta\,dx\,dt
+\sum_{k=1}^N\int_0^T\int_{V_k}\theta\overline{P^\varepsilon}_k\overline{\mathbf{u}^{\varepsilon}}_k\cdot\nabla(\xi_k^2)\,dx\,dt\notag\\[4pt]
=&\sum_{k=1}^N\int_0^T\int_{V_k}\overline{P^\varepsilon}_k\xi_k^2\overline{\mathbf{u}^{\varepsilon}}_k\cdot\nabla\theta\,dx\,dt+\sum_{k=1}^N\int_0^T\int_{V_{k-1}\cap V_k}\theta\overline{P}_k^\varepsilon\overline{\mathbf{u}^{\varepsilon}}_k\cdot\nabla(\xi_k^2)\,dx\,dt\notag\\
&+\sum_{k=1}^N\int_0^T\int_{V_k\cap V_{k+1}}\theta\overline{P^\varepsilon}_{k+1}\overline{\mathbf{u}^\varepsilon}_{k+1}\cdot\nabla(\xi_k^2)\,dx\,dt\notag\\[4pt]
=&\sum_{k=1}^N\int_0^T\int_{V_k}\overline{P^\varepsilon}_k\xi_k^2\overline{\mathbf{u}^{\varepsilon}}_k\cdot\nabla\theta\,dx\,dt
-2\sum_{k=1}^{N-1}\int_0^T\int_{V_k\cap V_{k+1}}\theta\overline{P^\varepsilon}_k\overline{\mathbf{u}^{\varepsilon}}_k\cdot\nabla(\xi_k\xi_{k+1})\,dx\,dt.
\end{align}

Next we treat the case \(|k-m|=1\). As similar computation in
\eqref{3.23}, it follows that
\begin{align}\label{3.28}
&\sum_{|k-m|=1}\int_0^T\int_{\Omega}
\xi_k\xi_m\nabla\theta\cdot\overline{\mathbf{u}^{\varepsilon}}_k\overline{P^\varepsilon}_m\,dx\,dt
+\sum_{|k-m|=1}\int_0^T\int_{\Omega}\theta\nabla(\xi_k\xi_m)\cdot\overline{\mathbf{u}^{\varepsilon}}_k\overline{P^\varepsilon}_m\,dx\,dt\notag\\
=&\sum_{k=1}^{N-1}2\int_0^T\int_{V_k\cap V_{k+1}}\Bigl(\overline{P^\varepsilon}_k\xi_k\xi_{k+1}
\overline{\mathbf{u}^\varepsilon}_{k}\cdot\nabla\theta\,
+\theta\overline{P^\varepsilon}_k\overline{\mathbf{u}^\varepsilon}_{k}\cdot\nabla(\xi_k\xi_{k+1})\Bigr)dx\,dt.
\end{align}

Combining \eqref{3.27} and \eqref{3.28}, we obtain the following estimate for the sum over \(|k-m|\le1\):
\begin{align}\label{3.29}
\left|R_3\right|
&=\left|\sum_{k=1}^N\int_0^T\int_{V_k}\overline{P^\varepsilon}_k\xi_k^2\overline{\mathbf{u}^{\varepsilon}}_k\cdot\nabla\theta\,dx\,dt
+2\sum_{k=1}^{N-1}\int_0^T\int_{V_k\cap V_{k+1}}\xi_k\xi_{k+1}\,\overline{P^\varepsilon}_k\overline{\mathbf{u}^{\varepsilon}}_k\cdot\nabla\theta\,dx\,dt\right|\notag\\[4pt]
&\le C\int_0^T\int_{\Gamma_{4\delta}\cap\Omega^{2\delta}}|\overline{P^\varepsilon}_k||\overline{\mathbf{u}^{\varepsilon}}_k||\nabla\theta|\,dx\,dt\notag\\[4pt]
&\le C\int_0^T\int_{\Gamma_{4\delta}\cap\Omega^{2\delta}}|\overline{P^\varepsilon}_k||\nabla\mathbf{u}^\varepsilon|\,dx\,dt\notag\\[4pt]
&\le C\Bigl(\int_0^T\|\overline{P^\varepsilon}_k\|_{L^\infty(\Gamma_{4\delta})}^2\,dt\Bigr)^{\frac12}
\Bigl(\delta\int_0^T\int_{\Gamma_{4\delta}}|\nabla\mathbf{u}^\varepsilon|^2\,dx\,dt\Bigr)^{\frac12}.
\end{align}
Therefore by \eqref{1.7} and \eqref{1.8}, we conclude
\begin{equation}\label{3.30}
\lim_{\varepsilon\to0+}R_3=0.
\end{equation}
\end{proof}

\begin{Proposition}
[Vanishing of Nonlinear Energy Flux]
Let $(\mathbf{u}^\varepsilon, \mathbf{B}^\varepsilon)$ satisfy the uniform spatial H\"older bounds in the boundary layer. Then the nonlinear energy flux vanishes,
\begin{equation}\label{3.31}
\lim_{\varepsilon\to0+}\bigl(|R_4|+|R_5|\bigr)=0.
\end{equation}
\end{Proposition}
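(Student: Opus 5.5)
We will show that $R_4$ and $R_5$ tend to zero by rewriting them in Els\"asser variables and then following the commutator-plus-foliation strategy of \cite{chen22}. Set $\mathbf{z}^\pm=\mathbf{u}^\varepsilon\pm\mathbf{B}^\varepsilon$ and $\tilde{\mathbf{z}}^\pm=\tilde{\mathbf{u}}^\delta\pm\tilde{\mathbf{B}}^\delta$. Adding and subtracting gives
\[
R_4+R_5=-\frac12\int_0^T\!\!\int_\Omega\theta\Bigl(\sum_{n}\xi_n\overline{\mathbf{z}^{+}}_n\Bigr)\cdot\Bigl(\sum_n\xi_n\operatorname{div}\overline{(\mathbf{z}^-\otimes\mathbf{z}^+)}_n\Bigr)dxdt-\frac12\int_0^T\!\!\int_\Omega\theta\Bigl(\sum_{n}\xi_n\overline{\mathbf{z}^{-}}_n\Bigr)\cdot\Bigl(\sum_n\xi_n\operatorname{div}\overline{(\mathbf{z}^+\otimes\mathbf{z}^-)}_n\Bigr)dxdt .
\]
Since \eqref{3.31} asks for $|R_4|+|R_5|$, we apply the same manipulation to $R_4$ and $R_5$ separately; the identity above mainly shows where cancellation comes from. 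Each resulting term has the form $\int\theta\,\tilde{\mathbf{a}}\cdot\sum_n\xi_n\operatorname{div}\overline{(\mathbf{b}\otimes\mathbf{c})}_n$ with $\mathbf{a},\mathbf{b},\mathbf{c}\in\{\mathbf{u}^\varepsilon,\mathbf{B}^\varepsilon\}$. By \eqref{1.5}, each such field is bounded in $L^3_tB_3^{\sigma,\infty}(\Omega^\delta)$ with $\sigma=\min\{\alpha_1,\alpha_2\}$.

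On each foliation layer we split
\[
\overline{(\mathbf{b}\otimes\mathbf{c})}_n=\bigl(\overline{(\mathbf{b}\otimes\mathbf{c})}_n-\overline{\mathbf{b}}_n\otimes\overline{\mathbf{c}}_n\bigr)+\overline{\mathbf{b}}_n\otimes\overline{\mathbf{c}}_n .
\]
Because \eqref{3.5} holds, the products $\xi_k\xi_m$ appear only for $|k-m|\le1$. For the \emph{commutator part} we integrate by parts so that the derivative falls on $\theta\xi_k\xi_m\overline{\mathbf{a}}_k$. This produces three contributions.
\begin{itemize}
\item The term with $\nabla\overline{\mathbf{a}}_k$ is estimated by Lemma \ref{lem:commutator} with $r=3/2$, $r_1=r_2=3$, together with \eqref{2.4} and H\"older on $V_k\cap V_m$. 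It gives $C\delta^{\beta_{k+1}(3\sigma-1)}$ on overlaps and $C\delta^{\beta_k(3\sigma-1)}$ elsewhere. The $L^3$ norm of $\xi$ over sets of measure $\delta^{\beta_{k-1}}$ supplies an extra factor $\delta^{\beta_{k-1}/3}$. Since $\sigma>1/3$ in both regimes, as checked in \eqref{3.20}, these powers are positive.
\item The term with $\nabla(\xi_k\xi_m)$ has size $\delta^{-\beta_{k-1}}$, but the $L^1$-in-space bound on the thin set gains $\delta^{\beta_{k-1}}$ up to the commutator size. If needed, we handle it by the same regrouping $\nabla(\xi_k+\xi_{k+1})^2=0$ as in \eqref{3.23}, exploiting \eqref{2.25}.
\item The term with $\nabla\theta\sim\delta^{-1}$ lives only in $V_N\cap\Gamma_{4\delta}$, a set of measure $\sim\delta$. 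We bound it using the $L^4_tL^\infty$ bounds \eqref{1.7} and the Hardy inequality \eqref{2.9} for $\mathbf{u}^\varepsilon$, as in \eqref{3.29}. This yields $C(\delta\int_{\Gamma_{4\delta}}|\nabla\mathbf{u}^\varepsilon|^2)^{1/2}$ and analogous terms, which tend to zero by \eqref{1.8} because $\delta\le\varepsilon^p,\varepsilon^q$.
\end{itemize}

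For the \emph{resolved part} $\int\theta\,\overline{\mathbf{a}}_k\xi_k\xi_m\cdot\operatorname{div}(\overline{\mathbf{b}}_m\otimes\overline{\mathbf{c}}_m)$ we use the exact Els\"asser structure. By \eqref{2.25}, on overlaps all mollifications coincide with the $(k+1)$-scale one, so we may define layerwise smooth fields. For smooth divergence-free $\overline{\mathbf{z}}^\mp$, the identity $\overline{\mathbf{z}}^\pm\cdot(\overline{\mathbf{z}}^\mp\cdot\nabla)\overline{\mathbf{z}}^\pm=\frac12\overline{\mathbf{z}}^\mp\cdot\nabla|\overline{\mathbf{z}}^\pm|^2$ holds. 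After integrating by parts, only $\nabla(\theta\xi_k\xi_m)$ terms survive; these are the symmetric flux functions \eqref{3.58}--\eqref{3.64} announced in the introduction. The $\nabla(\xi_k\xi_m)$ contributions cancel after summation over $|k-m|\le1$ by \eqref{2.23}, exactly as in \eqref{3.23}. The $\nabla\theta$ contribution is cubic in the fields on $\Gamma_{4\delta}\cap\Omega^{2\delta}$. We bound it by $\delta^{-1}\cdot\delta\cdot\|\cdot\|_{L^\infty}^2\|\mathbf{u}^\varepsilon\|$, where the factor $\mathbf{u}^\varepsilon\cdot\nabla\theta$ (the normal component) is again controlled through \eqref{2.9} and \eqref{1.8}. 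We also need that $\mathbf{B}\cdot\mathbf{n}=0$ gives smallness of $\overline{\mathbf{B}}\cdot\nabla\theta$; this should follow from a Hardy/Poincar\'e bound on the normal component, using $\mathbf{B}^\varepsilon\in W^{1,2}_\sigma$ and the dissipation in \eqref{1.8}.

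The main obstacle is the commutator estimate across layers near the boundary. There the exponents must close, in the sense that
\[
\delta^{\beta_{k+1}(3\sigma-1)\text{-type}}\cdot\delta^{-\beta_{k+1}(1-\sigma)}\cdot\delta^{\beta_k/3}\to0,
\]
uniformly up to $\beta_N=1$, in terms of $\delta=\min\{\varepsilon^p,\varepsilon^q\}$. We will verify this exactly via the recursion \eqref{2.18}, separating Case I ($p\ge q$) and Case II ($p<q$) as in \eqref{3.9}--\eqref{3.14}. The threshold $\sigma>\frac56-\frac{\min\{p,q\}}{2\max\{p,q\}}$ from \eqref{1.6} is expected to be precisely what makes the last layer $n=N$ work.
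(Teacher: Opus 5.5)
The overall architecture of your plan is the paper's: the commutator/resolved splitting, the Leibniz rule on $\nabla(\theta\xi_k\xi_m\overline{\mathbf a}_k)$, the Els\"asser rewriting of the resolved part, the $(\xi_k+\xi_{k+1})^2$ regrouping, and \eqref{1.7}, \eqref{1.8}, \eqref{2.9} for the $\nabla\theta$ terms.

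\textbf{The step that fails.} Prompted by the absolute values in \eqref{3.31}, you decide to run the argument on $R_4$ and $R_5$ separately. The commutator pieces do vanish individually. The resolved part of $R_4$, however, contains $\sum_{|k-m|\le1}\int_0^T\!\int_\Omega\theta\xi_k\xi_m\,\overline{\mathbf u^\varepsilon}_k\cdot(\overline{\mathbf B^\varepsilon}_m\cdot\nabla)\overline{\mathbf B^\varepsilon}_m\,dx\,dt$. This is not the derivative of a quadratic quantity and is not small. It is cancelled only by the term $\overline{\mathbf B^\varepsilon}_k\cdot(\overline{\mathbf B^\varepsilon}_m\cdot\nabla)\overline{\mathbf u^\varepsilon}_m$, which sits in $R_5$; using \eqref{2.25}, the two together form $\overline{\mathbf B^\varepsilon}_m\cdot\nabla(\overline{\mathbf u^\varepsilon}_m\cdot\overline{\mathbf B^\varepsilon}_m)$.

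Likewise, your identity $\overline{\mathbf z}^\pm\cdot(\overline{\mathbf z}^\mp\cdot\nabla)\overline{\mathbf z}^\pm=\frac12\overline{\mathbf z}^\mp\cdot\nabla|\overline{\mathbf z}^\pm|^2$ applies only to $R_4+R_5$. So your resolved-part step contradicts your separate treatment. In fact $R_4$ alone formally tends to the Lorentz-force work $\int_0^T\!\int_\Omega\mathbf u\cdot(\mathbf B\cdot\nabla)\mathbf B\,dx\,dt$, the kinetic--magnetic energy exchange. This is generically nonzero, e.g.\ for $\mathbf u_0=0$ and $\mathbf B_0$ not a magnetostatic equilibrium.

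Hence $|R_4|+|R_5|\to0$ cannot be proved as written. What the paper actually proves is $R_4+R_5=\sum_{i=1}^5 I_i\to0$, and that is all \eqref{4.1} needs. If you work with the sum throughout, as in your first display, your argument becomes the paper's.

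\textbf{Smaller corrections.} The $\nabla(\xi_k\xi_m)$ part of the commutator cannot be handled by a direct bound. On $V_k\cap V_{k+1}$ one has $|\nabla\xi|\sim\delta^{-\beta_{k+1}}$, the commutator is of size $\delta^{2\sigma\beta_{k+1}}$ in $L^{3/2}$, and $\overline{\mathbf a}$ is bounded in $L^3$. Together these give only $\delta^{\beta_{k+1}(2\sigma-1)}$. No $L^\infty$ bound is available off $\Gamma_{4\delta}$ to exploit the thinness of the set, and \eqref{1.6} allows $\sigma$ near $1/3$ when $p=q$. So the exact cancellation via \eqref{2.23} and \eqref{2.25}, as in \eqref{3.46} and \eqref{3.53}, is required, not optional.

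Your ``main obstacle'' is also misplaced. Your first bullet already has the right bound $\delta^{\beta_k(3\sigma-1)}$. The loss $\delta^{\beta_k(\sigma-1)}$ from \eqref{2.4} is already inside $3\sigma-1$, and the $L^{3/2}\times L^3$ pairing leaves no room for an extra $\|\xi\|_{L^3}$ factor. This bound vanishes simply because $\sigma>1/3$. Neither the recursion \eqref{2.18} nor the sharper threshold in \eqref{1.6} enters this proposition; they are used in Proposition 3.1, where $\varepsilon^q$ must beat $\delta^{2\beta_{k+1}(\alpha_2-1)}$. The exponent $\beta_{k+1}(4\sigma-2)+\beta_k/3$ you propose to check double counts the gradient loss and is negative for $\sigma$ near $1/3$.

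On the positive side, you note that the $\nabla\theta$ term of the Els\"asser flux needs smallness of the normal component of $\overline{\mathbf u}-\overline{\mathbf B}$. For $\mathbf B^\varepsilon$ this must come from $\mathbf B^\varepsilon\cdot\mathbf n=0$ rather than from \eqref{2.9}. That is more careful than \eqref{3.72}, which applies \eqref{2.9} to $\mathbf W_-$ even though $\mathbf B^\varepsilon\notin W^{1,2}_0$.
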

\begin{proof}
Firstly, we integrate $R_4+R_5$ by parts, and rewrite
\begin{equation}\label{3.32}
R_4 + R_5=  \sum_{i=1}^{5} I_i,
\end{equation}
where the nonlinear terms are explicitly given by
\begin{align}
\label{3.33}I_1
&= -\sum_{|k-m|\le1}\int_0^T\int_{\Omega}
\bigl(\overline{\mathbf{u}^{\varepsilon}}_m\otimes\overline{\mathbf{u}^{\varepsilon}}_m
-\overline{(\mathbf{u}^\varepsilon\otimes\mathbf{u}^\varepsilon)}_m\big):
\nabla\bigl(\theta\xi_k\xi_m\overline{\mathbf{u}^{\varepsilon}}_k\bigr)\,dx\,dt,
\\[4pt]
\label{3.34}I_2
&= \sum_{|k-m|\le1}\int_0^T\int_{\Omega}
\bigl(\overline{\mathbf{B}^{\varepsilon}}_m\otimes\overline{\mathbf{B}^{\varepsilon}}_m
-\overline{(\mathbf{B}^\varepsilon\otimes\mathbf{B}^\varepsilon)}_m\big):
\nabla\bigl(\theta\xi_k\xi_m\overline{\mathbf{u}^{\varepsilon}}_k\bigr)\,dx\,dt,
\\[4pt]
\label{3.35}I_3
&= -\sum_{|k-m|\le1}\int_0^T\int_{\Omega}
\bigl(\overline{\mathbf{u}^{\varepsilon}}_m\otimes\overline{\mathbf{B}^{\varepsilon}}_m
-\overline{(\mathbf{u}^\varepsilon\otimes\mathbf{B}^\varepsilon)}_m\big):
\nabla\bigl(\theta\xi_k\xi_m\overline{\mathbf{B}^{\varepsilon}}_k\bigr)\,dx\,dt,
\\[4pt]
\label{3.36}I_4
&= \sum_{|k-m|\le1}\int_0^T\int_{\Omega}
\bigl(\overline{\mathbf{B}^{\varepsilon}}_m\otimes\overline{\mathbf{u}^{\varepsilon}}_m
-\overline{(\mathbf{B}^\varepsilon\otimes\mathbf{u}^\varepsilon)}_m\big):
\nabla\bigl(\theta\xi_k\xi_m\overline{\mathbf{B}^{\varepsilon}}_k\bigr)\,dx\,dt,
\\[4pt]
\label{3.37}I_5
&= \sum_{|k-m| \le 1} \int_0^T \int_\Omega \left( \overline{\mathbf{u}^{\varepsilon}}_m \otimes \overline{\mathbf{u}^{\varepsilon}}_m - \overline{\mathbf{B}^{\varepsilon}}_m \otimes \overline{\mathbf{B}^{\varepsilon}}_m \right) : \nabla \bigl( \theta \xi_k \xi_m \overline{\mathbf{u}^{\varepsilon}}_k \bigr) \, dx dt\notag
\\
&\quad + \sum_{|k-m| \le 1} \int_0^T \int_\Omega \left(\overline{\mathbf{u}^{\varepsilon}}_m \otimes \overline{\mathbf{B}^{\varepsilon}}_m-\overline{\mathbf{B}^{\varepsilon}}_m \otimes \overline{\mathbf{u}^{\varepsilon}}_m \right) : \nabla \bigl( \theta \xi_k \xi_m \overline{\mathbf{B}^{\varepsilon}}_k \bigr) \, dx dt.
\end{align}

In contrast with the Navier-Stokes system which only involves velocity field, the MHD system requires handling the coupling between velocity and magnetic fields, which is more complicated to control. In order to show the convergence of all the terms in \eqref{3.32}, we proceed with the proof step by step as follows:

\bigskip
\noindent \textbf{Step 1: (Velocity Advection Commutator)}
\bigskip

Firstly, we consider the commutator error arising from the convective term, that is \eqref{3.33} and obtain
\begin{equation}\label{3.38}
\lim_{\varepsilon\to0+}\sum_{|k-m|\le1}\int_0^T\int_{\Omega}
\bigl(\overline{\mathbf{u}^{\varepsilon}}_m\otimes\overline{\mathbf{u}^{\varepsilon}}_m-\overline{(\mathbf{u}^\varepsilon\otimes\mathbf{u}^\varepsilon)}_m\bigr):
\nabla\bigl(\theta\xi_k\xi_m\overline{\mathbf{u}^{\varepsilon}}_k\bigr)\,dx\,dt=0.
\end{equation}
The detailed proof can be found in Lemma 3.2 of \cite{chen22}, and we skip it.

\bigskip
\noindent \textbf{Step 2: (Magnetic Stress Commutator)}

\bigskip
Using a similar argument, we estimate the commutator term \eqref{3.34} associated with the Lorentz force. By replacing the velocity field with the magnetic field in the commutator structure, we obtain
\begin{equation}\label{3.39}
 \lim_{\varepsilon \to 0+} \sum_{|k-m| \le 1} \int_0^T \int_\Omega \bigl( \overline{\mathbf{B}^{\varepsilon}}_m \otimes \overline{\mathbf{B}^{\varepsilon}}_m - \overline{(\mathbf{B}^\varepsilon \otimes \mathbf{B}^\varepsilon)}_m \bigr) : \nabla \bigl( \theta \xi_k \xi_m \overline{\mathbf{u}^{\varepsilon}}_k \bigr) \, dx dt = 0.
\end{equation}

We apply the Leibniz product rule to the gradient term and decompose it into three parts as follows:
\begin{align}\label{3.40}
&\sum_{|k-m|\le1}\int_0^T\int_{\Omega}
\bigl(\overline{\mathbf{B}^{\varepsilon}}_m\otimes\overline{\mathbf{B}^{\varepsilon}}_m
-\overline{(\mathbf{B}^\varepsilon\otimes\mathbf{B}^\varepsilon)}_m\bigr):
\nabla\bigl(\theta\xi_k\xi_m\overline{\mathbf{u}^{\varepsilon}}_k\bigr)\,dx\,dt \notag\\
=&\sum_{|k-m|\le1}\int_0^T\int_{\Omega}
\bigl(\overline{\mathbf{B}^{\varepsilon}}_m\otimes\overline{\mathbf{B}^{\varepsilon}}_m
-\overline{(\mathbf{B}^\varepsilon\otimes\mathbf{B}^\varepsilon)}_m\bigr):
\bigl(\xi_k\xi_m\nabla\theta \otimes \overline{\mathbf{u}^{\varepsilon}}_k\bigr)\,dx\,dt  \notag\\
&+ \sum_{|k-m|\le1}\int_0^T\int_{\Omega}
\bigl(\overline{\mathbf{B}^{\varepsilon}}_m\otimes\overline{\mathbf{B}^{\varepsilon}}_m
-\overline{(\mathbf{B}^\varepsilon\otimes\mathbf{B}^\varepsilon)}_m\bigr):
\bigl(\theta\xi_k\xi_m\nabla\overline{\mathbf{u}^{\varepsilon}}_k\bigr)\,dx\,dt  \notag\\
&+ \sum_{|k-m|\le1}\int_0^T\int_{\Omega}
\bigl(\overline{\mathbf{B}^{\varepsilon}}_m\otimes\overline{\mathbf{B}^{\varepsilon}}_m
-\overline{(\mathbf{B}^\varepsilon\otimes\mathbf{B}^\varepsilon)}_m\bigr):
\bigl(\theta\nabla(\xi_k\xi_m) \otimes \overline{\mathbf{u}^{\varepsilon}}_k\bigr)\,dx\,dt  \notag\\
=& I_{21} + I_{22} + I_{23}.
\end{align}

Firstly, we concern on $I_{21}$. Applying Hardy inequality \eqref{2.9} together with the scaling properties of the cutoff function $\theta$, we obtain
\begin{align}\label{3.41}
|I_{21}|=
&\left| \sum_{|k-m|\leq 1} \int_0^T \int_\Omega \left( \overline{\mathbf{B}^{\varepsilon}}_m \otimes \overline{\mathbf{B}^{\varepsilon}}_m - \overline{({\mathbf{B}}^\varepsilon \otimes {\mathbf{B}}^\varepsilon)}_m \right) : \left( \xi_k\xi_m \nabla \theta\otimes\overline{\mathbf{u}^{\varepsilon}}_k \right) dx dt \right|\notag \\
&\leq C \left( \int_0^T \int_{\Gamma_{4\delta} \cap \Omega^{2\delta}} |\overline{\mathbf{B}^\varepsilon}_N |^4 dx dt \right)^{\frac{1}{2}} \left( \int_0^T \int_{\Gamma_{4\delta} \cap \Omega^{2\delta}} |{\xi^2_N}\nabla \theta\otimes (\overline{\mathbf{u}^\varepsilon}_N - \mathbf{u}^\varepsilon + \mathbf{u}^\varepsilon)|^2 dx dt \right)^{\frac{1}{2}}\notag \\
&\leq C \left(\delta \int_0^T \|\overline{\mathbf{B}^\varepsilon}_N\|_{L^\infty(\Gamma_{4\delta})}^4 dt \right)^{\frac{1}{2}} \left( \int_0^T \delta^{-\frac{5}{3}} \|\overline{\mathbf{u}^\varepsilon}_N - \mathbf{u}^\varepsilon\|_{L^3(\Gamma_{4\delta} \cap \Omega^{2\delta})}^2 dt + \int_0^T \int_{\Gamma_{4\delta}} |\nabla \theta \mathbf{u}^\varepsilon|^2 dx dt \right)^{\frac{1}{2}}\notag \\
&\leq C \delta^{\frac{1}{2}} \left( \int_0^T \delta^{-\frac{5}{3}} \cdot \delta^{2\alpha_1} \|\mathbf{u}^\varepsilon\|_{B^{\alpha_1, \infty}_3(\Omega^\delta)}^2 dt + \int_0^T \int_{\Gamma_{4\delta}} |\nabla \mathbf{u}^\varepsilon|^2 dx dt \right)^{\frac{1}{2}}\notag \\
&\leq C \left( \delta^{2\alpha_1-\frac{2}{3}} + \delta \int_0^T \int_{\Gamma_{4\delta}} |\nabla \mathbf{u}^\varepsilon|^2 dx dt \right)^{\frac{1}{2}}.
\end{align}

We could deduce the fact that $\alpha_1 > 1/3$ from \eqref{1.6}, no matter $p\geq p$ or $p<q$.
Therefore, combining $\alpha_1 > 1/3$ together with condition \eqref{1.8}, we obtain
\begin{equation}\label{3.42}
\lim_{\varepsilon\to 0+} I_{21} = 0.
\end{equation}

Next, we turn to the term \(I_{22}\). For the case \(|k-m|=0\), we focus on the diagonal contribution:
\begin{align}\label{3.43}
&\sum_{|k-m|=0} \int_0^T \int_\Omega \left( \overline{\mathbf{B}^{\varepsilon}}_m \otimes \overline{\mathbf{B}^{\varepsilon}}_m - \overline{(\mathbf{B}^{\varepsilon} \otimes \mathbf{B}^{\varepsilon})}_m \right) : \left( \theta \xi_k \xi_m \nabla\overline{\mathbf{u}^{\varepsilon}}_k \right) dxdt \notag\\
=& \sum_{k=1}^{N} \int_0^T \int_\Omega \left( \overline{\mathbf{B}^{\varepsilon}}_k \otimes \overline{\mathbf{B}^{\varepsilon}}_k - \overline{(\mathbf{B}^{\varepsilon} \otimes \mathbf{B}^{\varepsilon})}_k \right) :\left( \theta \xi_k^2 \nabla \overline{\mathbf{u}^{\varepsilon}}_k\right) \, dxdt\notag \\
\leq &C \sum_{k=1}^{N} \int_0^T \left\| \overline{\mathbf{B}^{\varepsilon}}_k \otimes \overline{\mathbf{B}^{\varepsilon}}_k - \overline{(\mathbf{B}^{\varepsilon} \otimes \mathbf{B}^{\varepsilon})}_k \right\|_{L^{\frac{3}{2}}(V_k \cap V_{k+1})} \|\nabla \overline{\mathbf{u}^{\varepsilon}}_k\|_{L^3(V_k \cap V_{k+1})} dt \notag\\
&+ C \sum_{k=1}^{N} \int_0^T \left\| \overline{\mathbf{B}^{\varepsilon}}_k \otimes \overline{\mathbf{B}^{\varepsilon}}_k - \overline{(\mathbf{B}^{\varepsilon} \otimes \mathbf{B}^{\varepsilon})}_k \right\|_{L^{\frac{3}{2}}(V_k \cap V_{k+1}^c)} \|\nabla \overline{\mathbf{u}^{\varepsilon}}_k\|_{L^3(V_k \cap V_{k+1}^c)} dt \notag\\
\leq & C \sum_{k=1}^{N} \int_0^T \left( \delta^{\left[2\beta_{k+1}\alpha_2+\beta_{k+1}(\alpha_1-1)\right]} + \delta^{\left[2\beta_k\alpha_2+\beta_k(\alpha_1-1)\right]} \right) \|\mathbf{B}^\varepsilon\|^2_{B_3^{\alpha_2,\infty}(\Omega^\delta)} \|\mathbf{\mathbf{u}}^\varepsilon\|_{B_3^{\alpha_1,\infty}(\Omega^\delta)} dt \notag\\
\leq & C \sum_{k=1}^{N} \delta^{\beta_k(2\alpha_2+\alpha_1-1)}.
\end{align}

Next, for the off-diagonal case \(|k-m|=1\), we have
\begin{align}\label{3.44}
&\sum_{|k-m|=1} \int_0^T \int_\Omega \left( \overline{\mathbf{B}^{\varepsilon}}_m \otimes \overline{\mathbf{B}^{\varepsilon}}_m - \overline{(\mathbf{B}^{\varepsilon} \otimes \mathbf{B}^{\varepsilon})}_m \right) : \left( \theta \xi_k \xi_m \nabla \overline{\mathbf{u}^{\varepsilon}}_k \right) dxdt \notag\\
=& \sum_{k=1}^{N} \int_0^T \int_{V_k \cap V_{k+1}} \left( \overline{\mathbf{B}^{\varepsilon}}_{k+1} \otimes \overline{\mathbf{B}^{\varepsilon}}_{k+1} - \overline{(\mathbf{B}^{\varepsilon} \otimes \mathbf{B}^{\varepsilon})}_{k+1} \right) : \left(\theta \xi_k \xi_{k+1} \nabla \overline{\mathbf{u}^{\varepsilon}}_k\right) \, dxdt\notag \\
&+ \sum_{k=1}^{N} \int_0^T \int_{V_k \cap V_{k-1}} \left( \overline{\mathbf{B}^{\varepsilon}}_{k-1} \otimes \overline{\mathbf{B}^{\varepsilon}}_{k-1} - \overline{(\mathbf{B}^{\varepsilon} \otimes \mathbf{B}^{\varepsilon})}_{k-1} \right) : \left(\theta \xi_k \xi_{k-1} \nabla \overline{\mathbf{u}^{\varepsilon}}_k\right) \, dxdt\notag \\
 \leq& C \sum_{k=1}^{N} \int_0^T \left\| \overline{\mathbf{B}^{\varepsilon}}_{k+1} \otimes \overline{\mathbf{B}^{\varepsilon}}_{k+1} - \overline{(\mathbf{B}^{\varepsilon} \otimes \mathbf{B}^{\varepsilon})}_{k+1} \right\|_{L^{\frac{3}{2}}(V_k \cap V_{k+1})} \|\nabla \overline{\mathbf{u}^{\varepsilon}}_k\|_{L^3(V_k \cap V_{k+1})} dt \notag\\
&+ C \sum_{k=1}^{N} \int_0^T \left\| \overline{\mathbf{B}^{\varepsilon}}_{k-1} \otimes \overline{\mathbf{B}^{\varepsilon}}_{k-1} - \overline{(\mathbf{B}^{\varepsilon} \otimes \mathbf{B}^{\varepsilon})}_{k-1} \right\|_{L^{\frac{3}{2}}(V_k \cap V_{k-1})} \|\nabla \overline{\mathbf{u}^{\varepsilon}}_k\|_{L^3(V_k \cap V_{k-1})} dt \notag\\
 \leq& C \sum_{k=1}^{N} \int_0^T \left( \delta^{\left[2\beta_{k+1}\alpha_2+\beta_{k+1}(\alpha_1-1)\right]} + \delta^{\left[2\beta_{k}\alpha_2+\beta_k(\alpha_1-1)\right]} \right) \|\mathbf{B}^\varepsilon\|^2_{B_3^{\alpha_2,\infty}(\Omega^\delta)} \|\mathbf{u}^\varepsilon\|_{B_3^{\alpha_1,\infty}(\Omega^\delta)} dt\notag \\
 \leq& C \sum_{k=1}^N \delta^{\beta_k (2\alpha_2 + \alpha_1 - 1)}.
\end{align}
From \eqref{1.6}, we get $\alpha_1,\alpha_2>\frac{1}{3}$, which implies $2\alpha_2 + \alpha_1 - 1 > 0$. So we obtain
\begin{equation}\label{3.45}
\lim_{\varepsilon\to 0+}  I_{22}= 0.
\end{equation}
Finally we estimate \(I_{23}\). Using \eqref{2.23}, we have
\begin{align}\label{3.46}
I_{23}
=& \sum_{|k-m|\le 1} \int_0^T \int_\Omega \left( \overline{\mathbf{B}^{\varepsilon}}_m \otimes \overline{\mathbf{B}^{\varepsilon}}_m - \overline{\left( \mathbf{B}^{\varepsilon} \otimes \mathbf{B}^{\varepsilon}\right)} _m \right): \left( \theta\nabla( \xi_k \xi_m)\otimes \overline{\mathbf{u}^{\varepsilon}}_k \right) dxdt \notag\\
=& \sum_{k=1}^{N} \int_0^T \Bigl(\int_{V_k\cap V_{k-1}} + \int_{V_k\cap V_{k+1}}\Bigr) \left( \overline{\mathbf{B}^{\varepsilon}}_k \otimes \overline{\mathbf{B}^{\varepsilon}}_k -  \overline{\left(\mathbf{B}^{\varepsilon} \otimes \mathbf{B}^{\varepsilon}\right)} _k \right) : \left( \theta\nabla(\xi_k^2) \otimes \overline{\mathbf{u}^{\varepsilon}}_k \right) dxdt\notag \\
&+ \sum_{k=1}^{N} \int_0^T \int_{V_k\cap V_{k+1}} \left( \overline{\mathbf{B}^{\varepsilon}}_{k+1} \otimes \overline{\mathbf{B}^{\varepsilon}}_{k+1} - \overline{\left( \mathbf{B}^{\varepsilon} \otimes \mathbf{B}^{\varepsilon}\right)}_{k+1} \right) : \left( \theta\nabla(\xi_k \xi_{k+1})\otimes  \overline{\mathbf{u}^{\varepsilon}}_k \right) dxdt \notag\\
&+ \sum_{k=1}^{N} \int_0^T \int_{V_k\cap V_{k-1}} \left( \overline{\mathbf{B}^{\varepsilon}}_{k-1} \otimes \overline{\mathbf{B}^{\varepsilon}}_{k-1} - \overline{\left( \mathbf{B}^{\varepsilon} \otimes \mathbf{B}^{\varepsilon} \right)}_{k-1} \right) : \left( \theta\nabla(\xi_k \xi_{k-1})\otimes  \overline{\mathbf{u}^{\varepsilon}}_k \right) dxdt\notag \\
=& \sum_{k=1}^{N} \int_0^T \Bigl(\int_{V_k\cap V_{k-1}} \left( \overline{\mathbf{B}^{\varepsilon}}_k \otimes \overline{\mathbf{B}^{\varepsilon}}_k - \overline{\left( \mathbf{B}^{\varepsilon} \otimes \mathbf{B}^{\varepsilon} \right)}_k \right) : \left( \theta\nabla(\xi_k^2) \times\overline{\mathbf{u}^{\varepsilon}}_k \right) dx\notag \\
&\qquad+ \int_{V_k\cap V_{k+1}} \left( \overline{\mathbf{B}^{\varepsilon}}_{k+1} \otimes \overline{\mathbf{B}^{\varepsilon}}_{k+1} - \overline{\left( \mathbf{B}^{\varepsilon} \otimes \mathbf{B}^{\varepsilon} \right)}_{k+1} \right) : \left( \theta\nabla(\xi_k^2) \otimes\overline{\mathbf{u}^{\varepsilon}}_k \right) dx \Bigr) dt\notag \\
&+ \sum_{k=1}^{N-1} \int_0^T \int_{V_k\cap V_{k+1}} \left( \overline{\mathbf{B}^{\varepsilon}}_{k+1} \otimes \overline{\mathbf{B}^{\varepsilon}}_{k+1} - \overline{\left( \mathbf{B}^{\varepsilon} \otimes \mathbf{B}^{\varepsilon} \right)}_{k+1} \right) : \left( \theta\nabla(2\xi_k \xi_{k+1}) \otimes\overline{\mathbf{u}^{\varepsilon}}_k \right) dxdt \notag\\
=& \sum_{k=1}^{N-1} \int_0^T \int_{V_k\cap V_{k+1}} \left( \overline{\mathbf{B}^{\varepsilon}}_{k+1} \otimes \overline{\mathbf{B}^{\varepsilon}}_{k+1} - \overline{\left(\mathbf{B}^{\varepsilon} \otimes \mathbf{B}^{\varepsilon}\right)}_{k+1} \right) : \left(\theta\left(\nabla(\xi_{k+1}^2) + \nabla(\xi_k^2) + \nabla(2\xi_k \xi_{k+1}) \right)\otimes \overline{\mathbf{u}^{\varepsilon}}_k\right) dxdt\notag \\
=& \sum_{k=1}^{N-1} \int_0^T \int_{V_k \cap V_{k+1}} \left( \overline{\mathbf{B}^{\varepsilon}}_{k+1} \otimes \overline{\mathbf{B}^{\varepsilon}}_{k+1} -  \overline{\left(\mathbf{B}^{\varepsilon} \otimes \mathbf{B}^{\varepsilon}\right)} _{k+1} \right): \left( \theta \nabla \left( \left(\xi_k + \xi_{k+1} \right)^2\right)\otimes \overline{\mathbf{u}^{\varepsilon}}_k \right) dxdt= 0.
\end{align}

Combining \eqref{3.42}, \eqref{3.45} and \eqref{3.46}, we get the conclusion \eqref{3.39}.
\bigskip

\noindent \textbf{Step 3: (Induction Advection Commutator)}
\medskip

For the mixed commutator term \(I_3\) in the induction equation, the regularity conditions (1.6) guarantee that
\begin{equation}\label{3.47}
    \lim_{\varepsilon \to 0+} I_3 = 0.
\end{equation}
By Leibniz's rule, we decompose the integral in \eqref{3.35} into $I_{31}, I_{32}, I_{33}$, where $\nabla$ acts on $\theta$, $\overline{\mathbf{B}^{\varepsilon}}_k$ and $\xi_k\xi_m$ respectively:
\begin{align}\label{3.48}
&\sum_{|k-m|\le1}\int_0^T\int_{\Omega}
\bigl(\overline{\mathbf{u}^{\varepsilon}}_m\otimes\overline{\mathbf{B}^{\varepsilon}}_m
-\overline{(\mathbf{u}^\varepsilon\otimes\mathbf{B}^\varepsilon)}_m\bigr):
\nabla\bigl(\theta\xi_k\xi_m\overline{\mathbf{B}^{\varepsilon}}_k\bigr)\,dx\,dt \notag\\
=& \sum_{|k-m|\le1}\int_0^T\int_{\Omega}
\bigl(\overline{\mathbf{u}^{\varepsilon}}_m\otimes\overline{\mathbf{B}^{\varepsilon}}_m
-\overline{(\mathbf{u}^\varepsilon\otimes\mathbf{B}^\varepsilon)}_m\bigr):
\bigl(\xi_k\xi_m\nabla\theta \otimes\overline{\mathbf{B}^{\varepsilon}}_k\bigr)\,dx\,dt \notag\\
&+ \sum_{|k-m|\le1}\int_0^T\int_{\Omega}
\bigl(\overline{\mathbf{u}^{\varepsilon}}_m\otimes\overline{\mathbf{B}^{\varepsilon}}_m
-\overline{(\mathbf{u}^\varepsilon\otimes\mathbf{B}^\varepsilon)}_m\bigr):
\bigl(\theta\xi_k\xi_m  \nabla\overline{\mathbf{B}^{\varepsilon}}_k\bigr)\,dx\,dt\notag \\
&+ \sum_{|k-m|\le1}\int_0^T\int_{\Omega}
\bigl(\overline{\mathbf{u}^{\varepsilon}}_m\otimes\overline{\mathbf{B}^{\varepsilon}}_m
-\overline{(\mathbf{u}^\varepsilon\otimes\mathbf{B}^\varepsilon)}_m\bigr):
\bigl(\theta \nabla(\xi_k\xi_m) \otimes \overline{\mathbf{B}^{\varepsilon}}_k\bigr)\,dx\,dt \notag\\
=& I_{31} + I_{32} + I_{33}.
\end{align}

Starting with the term containing $\nabla \theta$ (i.e., $I_{31}$), we have
\begin{align}\label{3.49}
& \sum_{|k-m|\le 1} \int_0^T \int_\Omega \left( \overline{\mathbf{u}^{\varepsilon}}_m \otimes \overline{\mathbf{B}^{\varepsilon}}_m - \overline{(\mathbf{u}^\varepsilon \otimes \mathbf{B}^\varepsilon)}_m \right) : \left(  \xi_k \xi_m\nabla\theta\otimes \overline{\mathbf{B}^{\varepsilon}}_k \right) dxdt \notag\\
=& \int_0^T \int_{\Omega^{2\delta} \cap \Gamma_{4\delta}} \left( \overline{\mathbf{u}^\varepsilon}_N \otimes \overline{\mathbf{B}^\varepsilon}_N - \overline{\left(\mathbf{u}^\varepsilon \otimes \mathbf{B}^\varepsilon\right)}_N \right) : \left(\xi_N^2 \nabla \theta \otimes\overline{\mathbf{B}^\varepsilon}_N \right) dxdt\notag \\
\leq & \int_0^T \left\| \overline{\mathbf{u}^\varepsilon}_N \otimes \overline{\mathbf{B}^\varepsilon}_N - \overline{(\mathbf{u}^\varepsilon \otimes \mathbf{B}^\varepsilon)}_N \right\|_{L^{\frac{3}{2}}(\Omega^{2\delta} \cap \Gamma_{4\delta})} \left\| \nabla \theta \, \overline{\mathbf{B}^\varepsilon}_N \right\|_{L^3(\Omega^{2\delta} \cap \Gamma_{4\delta})} dt\notag \\
\leq& C \int_0^T \delta^{\alpha_1+\alpha_2} \left\| \mathbf{u}^{\varepsilon} \right\|_{B_3^{\alpha_1,\infty}(\Omega^\delta)} \left\| \mathbf{B}^{\varepsilon}\right\|_{B_3^{\alpha_2,\infty}(\Omega^\delta)} \left\| \nabla \overline{\mathbf{B}^\varepsilon}_N \right\|_{L^3(\Omega^{2\delta} \cap \Gamma_{4\delta})} dt \notag\\
\leq& C\delta^{\alpha_1+2\alpha_2-1} \int_0^T \left\|\mathbf{u}^{\varepsilon}\right\|_{B_3^{\alpha_1,\infty}(\Omega^\delta)} \left\| \mathbf{B}^{\varepsilon} \right\|_{B_3^{\alpha_2,\infty}(\Omega^\delta)}^2 dt \notag\\
\leq& C\delta^{\alpha_1+2\alpha_2-1}.
\end{align}

Due to the regularity conditions on $\alpha_1$ and $\alpha_2$ in \eqref{1.6}, we have $\alpha_1 + 2\alpha_2 - 1 > 0$. Hence $I_{31} \to 0$ as $\varepsilon\to0+$.

Next we estimate $I_{32}$. For the case $|k-m|=0$, we have
\begin{align}\label{3.50}
&\sum_{k=1}^N \int_0^T \int_\Omega \bigl( \overline{\mathbf{u}^{\varepsilon}}_k \otimes \overline{\mathbf{B}^{\varepsilon}}_k - (\overline{{\mathbf{u}}^\varepsilon \otimes {\mathbf{B}}^\varepsilon)}_k \bigr) : \left(\theta \xi_k^2 \nabla \overline{\mathbf{B}^{\varepsilon}}_k\right)
\, dx \, dt\notag \\
\leq &C \sum_{k=1}^N \int_0^T \left\| \overline{\mathbf{u}^{\varepsilon}}_k \otimes \overline{\mathbf{B}^{\varepsilon}}_k - \overline{\left({\mathbf{u}}^\varepsilon \otimes {\mathbf{B}}^\varepsilon\right)}_k \right\|_{L^{\frac32}(V_k \cap V_{k+1})} \left\| \nabla \overline{\mathbf{B}^{\varepsilon}}_k \right\|_{L^3(V_k \cap V_{k+1})} \, dt \notag\\
&+ C \sum_{k=1}^N \int_0^T \left\| \overline{\mathbf{u}^{\varepsilon}}_k \otimes \overline{\mathbf{B}^{\varepsilon}}_k - \overline{\left({\mathbf{u}}^\varepsilon \otimes {\mathbf{B}}^\varepsilon\right)}_k \right\|_{L^{\frac32}(V_k \cap V_{k+1}^c)} \left\| \nabla \overline{\mathbf{B}^{\varepsilon}}_k \right\|_{L^3(V_k \cap V_{k+1}^c)} \, dt \notag\\
\leq &C \sum_{k=1}^N \int_0^T \left( \delta^{\beta_{k+1}(\alpha_1+\alpha_2) + \beta_{k+1}(\alpha_2-1)} + \delta^{\beta_k(\alpha_1+\alpha_2) + \beta_k(\alpha_2-1)} \right) \|\mathbf{B}^\varepsilon\|^2_{B_3^{\alpha_2,\infty}(\Omega^\delta)} \|\mathbf{u}^\varepsilon\|_{B_3^{\alpha_1,\infty}(\Omega^\delta)} \, dt \notag\\
\leq& C \sum_{k=1}^N \delta^{\beta_k (\alpha_1 + 2\alpha_2 - 1)}.
\end{align}

Similarly, for the case $|k-m|=1$, we obtain
\begin{align}\label{3.51}
&\sum_{|k-m|=1} \int_0^T \int_\Omega \left( \overline{\mathbf{u}^{\varepsilon}}_m \otimes \overline{\mathbf{B}^{\varepsilon}}_m - \overline{(\mathbf{u}^\varepsilon \otimes \mathbf{B}^\varepsilon)}_m \right) : \left(\theta \xi_k \xi_m \nabla \overline{\mathbf{B}^{\varepsilon}}_k\right) \, dxdt \\
= &\sum_{k=1}^N \int_0^T \left( \overline{\mathbf{u}^\varepsilon}_{k+1}\otimes \overline{\mathbf{B}^\varepsilon}_{k+1}- \overline{(\mathbf{u}^\varepsilon \otimes \mathbf{B}^\varepsilon)}_{k+1} \right) : \left(\theta \xi_k \xi_{k+1} \nabla \overline{\mathbf{B}^{\varepsilon}}_k\right)\, dxdt \notag\\
& + \sum_{k=1}^N \int_0^T \left( \overline{\mathbf{u}^\varepsilon}_{k-1}\otimes \overline{\mathbf{B}}_{k-1}^\varepsilon - \overline{(\mathbf{u}^\varepsilon \otimes \mathbf{B}^\varepsilon)}_{k-1} \right) :\left( \theta \xi_{k-1} \xi_k \nabla \overline{\mathbf{B}^{\varepsilon}}_k\right) \, dxdt \notag\\
\le& C \sum_{k=1}^N \int_0^T \left\| \overline{\mathbf{u}^\varepsilon}_{k+1} \otimes \overline{\mathbf{B}^\varepsilon}_{k+1} - \overline{(\mathbf{u}^\varepsilon \otimes \mathbf{B}^\varepsilon)}_{k+1} \right\|_{L^{\frac{3}{2}}(V_k \cap V_{k+1})} \left\| \nabla \overline{\mathbf{B}^\varepsilon}_{k+1} \right\|_{L^3(V_k \cap V_{k+1})} dt \notag\\
& + \sum_{k=1}^N \int_0^T \left\| \overline{\mathbf{u}^\varepsilon}_{k-1} \otimes \overline{\mathbf{B}^\varepsilon}_{k-1} - \overline{(\mathbf{u}^\varepsilon \otimes \mathbf{B}^\varepsilon)}_{k-1} \right\|_{L^{\frac{3}{2}}(V_k \cap V_{k-1})} \left\| \nabla \overline{\mathbf{B}^\varepsilon}_{k-1} \right\|_{L^3(V_k \cap V_{k-1})} dt \notag\\
\le &C \sum_{k=1}^N \delta^{\beta_{k+1}(\alpha_1+\alpha_2) + \beta_{k+1}(\alpha_2-1)} + \sum_{k=1}^N \delta^{\beta_{k}(\alpha_1+\alpha_2) + \beta_{k}(\alpha_2-1)}\notag\\
\le &C \sum_{k=1}^N \delta^{\beta_{k}(\alpha_1+\alpha_2) + \beta_{k}(\alpha_2-1)}.
\end{align}
Because $2\alpha_2+\alpha_1-1>0$,  thus $\delta^{\beta_k (\alpha_1 + 2\alpha_2 - 1)}$  tends to $0$ and hence $I_{32} \to 0$ as $\varepsilon\to0+$.

Finally for $I_{33}$, the proof of convergence is the same as that of \eqref{3.23} and \eqref{3.46} via condition \eqref{2.23}. Thus
\begin{align}\label{3.53}
&\sum_{|k-m| \le 1} \int_0^T \int_\Omega \bigl( \overline{\mathbf{u}^{\varepsilon}}_m \otimes \overline{\mathbf{B}^{\varepsilon}}_m - \overline{\left({\mathbf{u}}^\varepsilon \otimes {\mathbf{B}}^\varepsilon\right)}_m \bigr) : \left(\theta \nabla(\xi_k \xi_m) \times\overline{\mathbf{B}^{\varepsilon}}_k\right) dx dt \notag\\
= & \sum_{k=1}^{N-1} \int_0^T \int_{V_k \cap V_{k+1}} \bigl( \overline{\mathbf{u}^\varepsilon}_{k+1} \otimes \overline{\mathbf{B}^\varepsilon}_{k+1} - \overline{\left({\mathbf{u}}^\varepsilon \otimes {\mathbf{B}}^\varepsilon\right)}_{k+1} \bigr) : \left(\theta \nabla \bigl( \xi_k^2 + \xi_{k+1}^2 + 2\xi_k \xi_{k+1} \bigr)\otimes \overline{\mathbf{B}^{\varepsilon}}_k\right) dx dt = 0.
\end{align}
We omit the details.

In summary, we have established the following convergence \eqref{3.47} result for the cross-interaction terms,
provided the regularity exponents $\alpha_1$ and $\alpha_2$ satisfy the conditions in \eqref{1.6}.

\bigskip
\noindent\textbf{Step 4: (Induction stretching commutator).}
\medskip

Under the regularity hypotheses of Theorem 1.1, provided the exponents satisfy \eqref{1.6},
we have
\begin{equation}\label{3.54}
\lim_{\varepsilon\to0+} I_4= 0.
\end{equation}
The proof follows exactly the same process as Step 3, with only the roles of $\mathbf{u}$ and $\mathbf{B}$ interchanged in the nonlinearity, hence we omit the detail here.

\bigskip
\noindent \textbf{Step 5: (Direct Product Terms).}
\bigskip

We now turn our attention to the intricate nonlinear term $I_5$. By applying integration by parts to \eqref{3.37} and utilizing the divergence-free conditions $\nabla \cdot \overline{\mathbf{u}^{\varepsilon}}_m = 0$ and $\nabla \cdot \overline{\mathbf{B}^{\varepsilon}}_m = 0$, we can directly rewrite $I_5$ in the convective form as follows:
\begin{align}\label{3.55}
I_5 =& - \sum_{|k-m| \le 1} \int_0^T \int_\Omega \theta \xi_k \xi_m \overline{\mathbf{u}^{\varepsilon}}_k \cdot \bigl( (\overline{\mathbf{u}^{\varepsilon}}_m \cdot \nabla) \overline{\mathbf{u}^{\varepsilon}}_m \bigr) \, dx dt \notag\\
&+ \sum_{|k-m| \le 1} \int_0^T \int_\Omega \theta \xi_k \xi_m \overline{\mathbf{u}^{\varepsilon}}_k \cdot \bigl( (\overline{\mathbf{B}^{\varepsilon}}_m \cdot \nabla) \overline{\mathbf{B}^{\varepsilon}}_m \bigr) \, dx dt \notag\\
& + \sum_{|k-m| \le 1} \int_0^T \int_\Omega \theta \xi_k \xi_m \overline{\mathbf{B}^{\varepsilon}}_k \cdot \bigl( (\overline{\mathbf{B}^{\varepsilon}}_m \cdot \nabla) \overline{\mathbf{u}^{\varepsilon}}_m \bigr) \, dx dt \notag\\
& - \sum_{|k-m| \le 1} \int_0^T \int_\Omega \theta \xi_k \xi_m \overline{\mathbf{B}^{\varepsilon}}_k \cdot \bigl( (\overline{\mathbf{u}^{\varepsilon}}_m \cdot \nabla) \overline{\mathbf{B}^{\varepsilon}}_m \bigr) \, dx dt\notag \\
=& - \sum_{|k-m| \le 1} \int_0^T \int_\Omega \theta \xi_k \xi_m \, \Biggl\{
\overline{\mathbf{u}^{\varepsilon}}_k \cdot \Bigl[ (\overline{\mathbf{u}^{\varepsilon}}_m \cdot \nabla) \overline{\mathbf{u}^{\varepsilon}}_m - (\overline{\mathbf{B}^{\varepsilon}}_m \cdot \nabla) \overline{\mathbf{B}^{\varepsilon}}_m \Bigr]\notag \\
&\qquad\qquad\qquad\qquad\qquad+ \overline{\mathbf{B}^{\varepsilon}}_k \cdot \Bigl[ (\overline{\mathbf{u}^{\varepsilon}}_m \cdot \nabla) \overline{\mathbf{B}^{\varepsilon}}_m - (\overline{\mathbf{B}^{\varepsilon}}_m \cdot \nabla) \overline{\mathbf{u}^{\varepsilon}}_m \Bigr] \Biggr\} \, dx dt.
\end{align}

Firstly, we concern on the diagonal interactions, i.e., $|k-m|=0$, which can be rewritten as:
\begin{align}\label{3.56}
& \sum_{|k-m|=0} \int_0^T \int_\Omega \theta \xi_k \xi_m \Biggl\{
   \overline{\mathbf{u}^{\varepsilon}}_k \cdot \Bigl[ (\overline{\mathbf{u}^{\varepsilon}}_m \cdot \nabla) \overline{\mathbf{u}^{\varepsilon}}_m - (\overline{\mathbf{B}^{\varepsilon}}_m \cdot \nabla) \overline{\mathbf{B}^{\varepsilon}}_m \Bigr] \nonumber \\
&\qquad + \overline{\mathbf{B}^{\varepsilon}}_k \cdot \Bigl[ (\overline{\mathbf{u}^{\varepsilon}}_m \cdot \nabla) \overline{\mathbf{B}^{\varepsilon}}_m - (\overline{\mathbf{B}^{\varepsilon}}_m \cdot \nabla) \overline{\mathbf{u}^{\varepsilon}}_m \Bigr] \Biggr\} \, dx dt \nonumber \\
&= \sum_{m=1}^N \int_0^T \int_\Omega \theta \xi_m^2 \Biggl\{
   \overline{\mathbf{u}^{\varepsilon}}_m \cdot \Bigl[ (\overline{\mathbf{u}^{\varepsilon}}_m \cdot \nabla) \overline{\mathbf{u}^{\varepsilon}}_m - (\overline{\mathbf{B}^{\varepsilon}}_m \cdot \nabla) \overline{\mathbf{B}^{\varepsilon}}_m \Bigr] \nonumber \\
&\qquad + \overline{\mathbf{B}^{\varepsilon}}_m \cdot \Bigl[ (\overline{\mathbf{u}^{\varepsilon}}_m \cdot \nabla) \overline{\mathbf{B}^{\varepsilon}}_m - (\overline{\mathbf{B}^{\varepsilon}}_m \cdot \nabla) \overline{\mathbf{u}^{\varepsilon}}_m \Bigr] \Biggr\} \, dx dt.
\end{align}

In order to control the nonlinear couple terms in \eqref{3.56}, we introduce the localized averaged Els\"asser variables:
\begin{align*}
\overline{\mathbf{W}^\varepsilon}_{+,m} = \overline{\mathbf{u}^{\varepsilon}}_m + \overline{\mathbf{B}^{\varepsilon}}_m, \qquad \overline{\mathbf{W}^\varepsilon}_{-,m} = \overline{\mathbf{u}^{\varepsilon}}_m - \overline{\mathbf{B}^{\varepsilon}}_m.
\end{align*}

\begin{Remark}

In the theory of hyperbolic partial differential equations, the Els\"asser variables  are precisely the Riemann invariants
associated with the characteristic speeds \(
W_{\pm}\triangleq u\pm B\) in ideal incompressible MHD system \eqref{1.2}.
They satisfy the following equation:
\begin{equation}\label{3.57}
\partial_t W_\pm+W_\mp\cdot\nabla W_\pm=-\nabla P.
\end{equation}
Along the characteristic curves \(\dfrac{dx}{dt} = W_\mp\), the corresponding Els\"asser variable
\(W^\pm\) remains constant in the linear  regime, up to coupling effects. This identification diagonalises the hyperbolic system and separates forward and backward
propagating waves, showing that Els\"asser variables play the same role as Riemann invariants
do in classical gas dynamics or in any first-order hyperbolic system. We suggest readers refer to \cite{cobb} for details.
\end{Remark}

Next, we define the corresponding symmetric nonlinear function for each spatial scale $m$:
\begin{align}
\label{3.58} D_{+,m}^\varepsilon &= \int_0^T \int_\Omega \theta \xi_m^2 \,
\overline{\mathbf{W}^\varepsilon}_{+,m} \cdot \bigl[ (\overline{\mathbf{W}^\varepsilon}_{-,m} \cdot \nabla) \overline{\mathbf{W}^\varepsilon}_{+,m} \bigr] \, dx dt,\\
\label{3.59}D_{-,m}^\varepsilon &= \int_0^T \int_\Omega \theta \xi_m^2 \,
\overline{\mathbf{W}^\varepsilon}_{-,m} \cdot \bigl[ (\overline{\mathbf{W}^\varepsilon}_{+,m} \cdot \nabla) \overline{\mathbf{W}^\varepsilon}_{-,m} \bigr] \, dx dt.
\end{align}
According to the definition of $\mathbf{W}_{+,m}^\varepsilon$, $\mathbf{W}_{-,m}^\varepsilon$, it is evident to find
\begin{align}\label{3.60}
\overline{\mathbf{W}^\varepsilon}_{+,m} \cdot \bigl[ (\overline{\mathbf{W}^\varepsilon}_{-,m} \cdot \nabla) \overline{\mathbf{W}^\varepsilon}_{+,m} \bigr]
=& (\overline{\mathbf{u}^{\varepsilon}}_m + \overline{\mathbf{B}^{\varepsilon}}_m) \cdot \bigl[ ((\overline{\mathbf{u}^{\varepsilon}}_m - \overline{\mathbf{B}^{\varepsilon}}_m) \cdot \nabla) (\overline{\mathbf{u}^{\varepsilon}}_m + \overline{\mathbf{B}^{\varepsilon}}_m) \bigr] \notag\\
=& \overline{\mathbf{u}^{\varepsilon}}_m \cdot (\overline{\mathbf{u}^{\varepsilon}}_m \cdot \nabla) \overline{\mathbf{u}^{\varepsilon}}_m
   + \overline{\mathbf{u}^{\varepsilon}}_m \cdot (\overline{\mathbf{u}^{\varepsilon}}_m \cdot \nabla) \overline{\mathbf{B}^{\varepsilon}}_m\notag \\
& - \overline{\mathbf{u}^{\varepsilon}}_m \cdot (\overline{\mathbf{B}^{\varepsilon}}_m \cdot \nabla) \overline{\mathbf{u}^{\varepsilon}}_m
   - \overline{\mathbf{u}^{\varepsilon}}_m \cdot (\overline{\mathbf{B}^{\varepsilon}}_m \cdot \nabla) \overline{\mathbf{B}^{\varepsilon}}_m \notag\\
&+ \overline{\mathbf{B}^{\varepsilon}}_m \cdot (\overline{\mathbf{u}^{\varepsilon}}_m \cdot \nabla) \overline{\mathbf{u}^{\varepsilon}}_m
   + \overline{\mathbf{B}^{\varepsilon}}_m \cdot (\overline{\mathbf{u}^{\varepsilon}}_m \cdot \nabla) \overline{\mathbf{B}^{\varepsilon}}_m \notag\\
&- \overline{\mathbf{B}^{\varepsilon}}_m \cdot (\overline{\mathbf{B}^{\varepsilon}}_m \cdot \nabla) \overline{\mathbf{u}^{\varepsilon}}_m
   - \overline{\mathbf{B}^{\varepsilon}}_m \cdot (\overline{\mathbf{B}^{\varepsilon}}_m \cdot \nabla) \overline{\mathbf{B}^{\varepsilon}}_m,
\end{align}
and 
\begin{align}\label{3.61}
\overline{\mathbf{W}^\varepsilon}_{-,m} \cdot \bigl[ (\overline{\mathbf{W}^\varepsilon}_{+,m} \cdot \nabla) \overline{\mathbf{W}^\varepsilon}_{-,m} \bigr]
= &(\overline{\mathbf{u}^{\varepsilon}}_m - \overline{\mathbf{B}^{\varepsilon}}_m) \cdot \bigl[ ((\overline{\mathbf{u}^{\varepsilon}}_m + \overline{\mathbf{B}^{\varepsilon}}_m) \cdot \nabla) (\overline{\mathbf{u}^{\varepsilon}}_m - \overline{\mathbf{B}^{\varepsilon}}_m) \bigr] \notag\\
= & \overline{\mathbf{u}^{\varepsilon}}_m \cdot (\overline{\mathbf{u}^{\varepsilon}}_m \cdot \nabla) \overline{\mathbf{u}^{\varepsilon}}_m
   - \overline{\mathbf{u}^{\varepsilon}}_m \cdot (\overline{\mathbf{u}^{\varepsilon}}_m \cdot \nabla) \overline{\mathbf{B}^{\varepsilon}}_m\notag \\
& + \overline{\mathbf{u}^{\varepsilon}}_m \cdot (\overline{\mathbf{B}^{\varepsilon}}_m \cdot \nabla) \overline{\mathbf{u}^{\varepsilon}}_m
   - \overline{\mathbf{u}^{\varepsilon}}_m \cdot (\overline{\mathbf{B}^{\varepsilon}}_m \cdot \nabla) \overline{\mathbf{B}^{\varepsilon}}_m\notag \\
& - \overline{\mathbf{B}^{\varepsilon}}_m \cdot (\overline{\mathbf{u}^{\varepsilon}}_m \cdot \nabla) \overline{\mathbf{u}^{\varepsilon}}_m
   + \overline{\mathbf{B}^{\varepsilon}}_m \cdot (\overline{\mathbf{u}^{\varepsilon}}_m \cdot \nabla) \overline{\mathbf{B}^{\varepsilon}}_m\notag \\
& - \overline{\mathbf{B}^{\varepsilon}}_m \cdot (\overline{\mathbf{B}^{\varepsilon}}_m \cdot \nabla) \overline{\mathbf{u}^{\varepsilon}}_m
   + \overline{\mathbf{B}^{\varepsilon}}_m \cdot (\overline{\mathbf{B}^{\varepsilon}}_m \cdot \nabla) \overline{\mathbf{B}^{\varepsilon}}_m .
\end{align}

Putting \eqref{3.60} and \eqref{3.61} into \eqref{3.56}, we get
\begin{align}\label{3.62}
&\sum_{m=1}^N \int_0^T \int_\Omega \theta \xi_m^2 \Biggl\{
   \overline{\mathbf{u}^{\varepsilon}}_m \cdot \Bigl[ (\overline{\mathbf{u}^{\varepsilon}}_m \cdot \nabla) \overline{\mathbf{u}^{\varepsilon}}_m - (\overline{\mathbf{B}^{\varepsilon}}_m \cdot \nabla) \overline{\mathbf{B}^{\varepsilon}}_m \Bigr] \notag\\
&\qquad + \overline{\mathbf{B}^{\varepsilon}}_m \cdot \Bigl[ (\overline{\mathbf{u}^{\varepsilon}}_m \cdot \nabla) \overline{\mathbf{B}^{\varepsilon}}_m - (\overline{\mathbf{B}^{\varepsilon}}_m \cdot \nabla) \overline{\mathbf{u}^{\varepsilon}}_m \Bigr] \Biggr\} \, dx dt \notag\\
= & \frac{1}{2} \sum_{m=1}^N \bigl( D_{+,m}^\varepsilon + D_{-,m}^\varepsilon \bigr).
\end{align}

Next, we turn to the case $|k-m|=1$. Likewise, we also define
\begin{align}\label{3.63}
D_{+,k,m}^\varepsilon &= \int_0^T \int_\Omega \theta \xi_k \xi_m \overline{\mathbf{W}^\varepsilon}_{+,k} \cdot \bigl[ (\overline{\mathbf{W}^\varepsilon}_{-,m} \cdot \nabla) \overline{\mathbf{W}^\varepsilon}_{+,m} \bigr] dx dt, \\
\label{3.64} D_{-,k,m}^\varepsilon &= \int_0^T \int_\Omega \theta \xi_k \xi_m \overline{\mathbf{W}^\varepsilon}_{-,k} \cdot \bigl[ (\overline{\mathbf{W}^\varepsilon}_{+,m} \cdot \nabla) \overline{\mathbf{W}^\varepsilon}_{-,m} \bigr] dx dt.
\end{align}
By virtue of \eqref{3.63}-\eqref{3.64}, it is straightforward to get the following fact by calculating:
\begin{align}\label{3.65}
\overline{\mathbf{W}^\varepsilon}_{+,k} \cdot \bigl[ (\overline{\mathbf{W}^\varepsilon}_{-,m} \cdot \nabla) \overline{\mathbf{W}^\varepsilon}_{+,m} \bigr]
=& (\overline{\mathbf{u}^{\varepsilon}}_k + \overline{\mathbf{B}^{\varepsilon}}_k) \cdot \bigl[ ((\overline{\mathbf{u}^{\varepsilon}}_m - \overline{\mathbf{B}^{\varepsilon}}_m) \cdot \nabla) (\overline{\mathbf{u}^{\varepsilon}}_m + \overline{\mathbf{B}^{\varepsilon}}_m) \bigr] \notag\\
= &\overline{\mathbf{u}^{\varepsilon}}_k \cdot (\overline{\mathbf{u}^{\varepsilon}}_m \cdot \nabla) \overline{\mathbf{u}^{\varepsilon}}_m
   + \overline{\mathbf{u}^{\varepsilon}}_k \cdot (\overline{\mathbf{u}^{\varepsilon}}_m \cdot \nabla) \overline{\mathbf{B}^{\varepsilon}}_m\notag \\
&- \overline{\mathbf{u}^{\varepsilon}}_k \cdot (\overline{\mathbf{B}^{\varepsilon}}_m \cdot \nabla) \overline{\mathbf{u}^{\varepsilon}}_m
   - \overline{\mathbf{u}^{\varepsilon}}_k \cdot (\overline{\mathbf{B}^{\varepsilon}}_m \cdot \nabla) \overline{\mathbf{B}^{\varepsilon}}_m \notag\\
& + \overline{\mathbf{B}^{\varepsilon}}_k \cdot (\overline{\mathbf{u}^{\varepsilon}}_m \cdot \nabla) \overline{\mathbf{u}^{\varepsilon}}_m
   + \overline{\mathbf{B}^{\varepsilon}}_k \cdot (\overline{\mathbf{u}^{\varepsilon}}_m \cdot \nabla) \overline{\mathbf{B}^{\varepsilon}}_m \notag\\
&- \overline{\mathbf{B}^{\varepsilon}}_k \cdot (\overline{\mathbf{B}^{\varepsilon}}_m \cdot \nabla) \overline{\mathbf{u}^{\varepsilon}}_m
   - \overline{\mathbf{B}^{\varepsilon}}_k \cdot (\overline{\mathbf{B}^{\varepsilon}}_m \cdot \nabla) \overline{\mathbf{B}^{\varepsilon}}_m,
\end{align}
and
\begin{align}\label{3.66}
\overline{\mathbf{W}^\varepsilon}_{-,k} \cdot \bigl[ (\overline{\mathbf{W}^\varepsilon}_{+,m} \cdot \nabla) \overline{\mathbf{W}^\varepsilon}_{-,m} \bigr]
=& (\overline{\mathbf{u}^{\varepsilon}}_k - \overline{\mathbf{B}^{\varepsilon}}_k) \cdot \bigl[ ((\overline{\mathbf{u}^{\varepsilon}}_m + \overline{\mathbf{B}^{\varepsilon}}_m) \cdot \nabla) (\overline{\mathbf{u}^{\varepsilon}}_m - \overline{\mathbf{B}^{\varepsilon}}_m) \bigr] \notag\\
= &\overline{\mathbf{u}^{\varepsilon}}_k \cdot (\overline{\mathbf{u}^{\varepsilon}}_m \cdot \nabla) \overline{\mathbf{u}^{\varepsilon}}_m
   - \overline{\mathbf{u}^{\varepsilon}}_k \cdot (\overline{\mathbf{u}^{\varepsilon}}_m \cdot \nabla) \overline{\mathbf{B}^{\varepsilon}}_m \notag\\
&+ \overline{\mathbf{u}^{\varepsilon}}_k \cdot (\overline{\mathbf{B}^{\varepsilon}}_m \cdot \nabla) \overline{\mathbf{u}^{\varepsilon}}_m
   - \overline{\mathbf{u}^{\varepsilon}}_k \cdot (\overline{\mathbf{B}^{\varepsilon}}_m \cdot \nabla) \overline{\mathbf{B}^{\varepsilon}}_m \notag\\
&- \overline{\mathbf{B}^{\varepsilon}}_k \cdot (\overline{\mathbf{u}^{\varepsilon}}_m \cdot \nabla) \overline{\mathbf{u}^{\varepsilon}}_m
   + \overline{\mathbf{B}^{\varepsilon}}_k \cdot (\overline{\mathbf{u}^{\varepsilon}}_m \cdot \nabla) \overline{\mathbf{B}^{\varepsilon}}_m\notag \\
& - \overline{\mathbf{B}^{\varepsilon}}_k \cdot (\overline{\mathbf{B}^{\varepsilon}}_m \cdot \nabla) \overline{\mathbf{u}^{\varepsilon}}_m
   + \overline{\mathbf{B}^{\varepsilon}}_k \cdot (\overline{\mathbf{B}^{\varepsilon}}_m \cdot \nabla) \overline{\mathbf{B}^{\varepsilon}}_m .
\end{align}
Similarly, putting \eqref{3.65} and \eqref{3.66} into \eqref{3.63}-\eqref{3.64}, we have
\begin{align}\label{3.67}
&\sum_{|k-m| = 1} \int_0^T \int_\Omega \theta \xi_k \xi_m \, \Biggl\{
\overline{\mathbf{u}^{\varepsilon}}_k \cdot \Bigl[ (\overline{\mathbf{u}^{\varepsilon}}_m \cdot \nabla) \overline{\mathbf{u}^{\varepsilon}}_m - (\overline{\mathbf{B}^{\varepsilon}}_m \cdot \nabla) \overline{\mathbf{B}^{\varepsilon}}_m \Bigr] \notag \\
&\qquad\qquad + \overline{\mathbf{B}^{\varepsilon}}_k \cdot \Bigl[ (\overline{\mathbf{u}^{\varepsilon}}_m \cdot \nabla) \overline{\mathbf{B}^{\varepsilon}}_m - (\overline{\mathbf{B}^{\varepsilon}}_m \cdot \nabla) \overline{\mathbf{u}^{\varepsilon}}_m \Bigr] \Biggr\} \, dx dt \notag \\
&=\frac{1}{2} \sum_{|k-m|=1} \bigl( D_{+,k,m}^\varepsilon + D_{-,k,m}^\varepsilon \bigr).
\end{align}

Combining the diagonal contribution \eqref{3.62} and the off-diagonal contribution \eqref{3.67}, the nonlinear term $I_5$ is reconstructed as the sum of the macroscopic Els\"asser energy fluxes:
\begin{align}\label{3.68}
I_5 =& -\frac{1}{2} \sum_{m=1}^N \bigl( D_{+,m}^\varepsilon + D_{-,m}^\varepsilon \bigr) - \frac{1}{2} \sum_{|k-m|=1} \bigl( D_{+,k,m}^\varepsilon + D_{-,k,m}^\varepsilon \bigr)\notag \\
    =& -\frac{1}{2} \biggl( \sum_{m=1}^N D_{+,m}^\varepsilon + \sum_{|k-m|=1} D_{+,k,m}^\varepsilon \biggr)
       -\frac{1}{2} \biggl( \sum_{m=1}^N D_{-,m}^\varepsilon + \sum_{|k-m|=1} D_{-,k,m}^\varepsilon \biggr).
\end{align}

In the following we will show that the positive branch vanishes, that is
\begin{equation}\label{3.69}
\lim_{\varepsilon \to 0+} \left( \sum_{m=1}^ND_{+,m}^\varepsilon + \sum_{|k-m|=1} D_{+,k,m}^\varepsilon \right) = 0.
\end{equation}

\begin{itemize}
    \item \textbf{The case of $|k-m|=0$}
\end{itemize}

Applying integration by parts, we obtain:
\begin{align}\label{3.70}
\sum_{m=1}^ND_{+,m}^\varepsilon
&= \sum_{m=1}^N \int_0^T \int_{\Omega} \theta \xi_m^2 \overline{\mathbf{W}^\varepsilon}_{-,m} \cdot \nabla\!\left( \frac{1}{2} |\overline{\mathbf{W}^\varepsilon}_{+,m}|^2 \right) dx dt \notag\\
&= -\frac{1}{2} \sum_{m=1}^N \int_0^T \int_{V_m} |\overline{\mathbf{W}^\varepsilon}_{+,m}|^2 \xi_m^2 \overline{\mathbf{W}^\varepsilon}_{-,m} \cdot \nabla\theta \, dx dt
   - \frac{1}{2} \sum_{m=1}^N \int_0^T \int_{V_m} \theta |\overline{\mathbf{W}^\varepsilon}_{+,m}|^2 \overline{\mathbf{W}^\varepsilon}_{-,m} \cdot \nabla(\xi_m^2) dx dt \notag\\
&= -\Bigg( \frac{1}{2} \sum_{m=1}^N \int_0^T \int_{V_m} |\overline{\mathbf{W}^\varepsilon}_{+,m}|^2 \xi_m^2 \overline{\mathbf{W}^\varepsilon}_{-,m} \cdot \nabla\theta \, dx dt \notag\\
&\qquad + \frac{1}{2} \sum_{m=1}^N \int_0^T \int_{V_{m-1}\cap V_{m}} \theta |\overline{\mathbf{W}^\varepsilon}_{+,m}|^2 \overline{\mathbf{W}^\varepsilon}_{-,m} \cdot \nabla(\xi_m^2) dx dt \notag\\
&\qquad + \frac{1}{2} \sum_{m=1}^N \int_0^T \int_{V_m\cap V_{m+1}} \theta |\overline{\mathbf{W}^\varepsilon}_{+,m}|^2 \overline{\mathbf{W}^\varepsilon}_{-,m} \cdot \nabla(\xi_m^2) dx dt \Bigg) \notag\\
&= -\Bigg[ \frac{1}{2} \sum_{m=1}^N \int_0^T \int_{V_m} |\overline{\mathbf{W}^\varepsilon}_{+,m}|^2 \xi_m^2 \overline{\mathbf{W}^\varepsilon}_{-,m} \cdot \nabla\theta \, dx dt \notag\\
&\qquad + \frac{1}{2} \sum_{m=1}^{N-1} \int_0^T \int_{V_m\cap V_{m+1}}\theta |\overline{\mathbf{W}^\varepsilon}_{+,m}|^2 \overline{\mathbf{W}^\varepsilon}_{-,m} \cdot \nabla(\xi_m^2 + \xi_{m+1}^2) dx dt \notag\\
&\qquad + \frac{1}{2} \int_0^T \Bigg( \int_{V_0\cap V_1} \theta|\overline{\mathbf{W}}_{+,1}^\varepsilon|^2 \overline{\mathbf{W}}_{-,1}^\varepsilon \cdot \nabla(\xi_1^2) dx dt
      + \int_{V_N\cap V_{N+1}} \theta|\overline{\mathbf{W}}_{+,N}^\varepsilon|^2 \overline{\mathbf{W}}_{-,N}^\varepsilon \cdot \nabla(\xi_N^2) dx \Bigg) dt \Bigg] \notag\\
&= -\Bigg( \frac{1}{2} \sum_{m=1}^N \int_0^T \int_{V_m} |\overline{\mathbf{W}^\varepsilon}_{+,m}|^2 \xi_m^2 \overline{\mathbf{W}^\varepsilon}_{-,m} \cdot \nabla\theta \, dx dt \notag\\
&\qquad - \sum_{m=1}^{N-1} \int_0^T \int_{V_m\cap V_{m+1}} \theta|\overline{\mathbf{W}^\varepsilon}_{+,m}|^2 \overline{\mathbf{W}^\varepsilon}_{-,m} \cdot \nabla(\xi_m\xi_{m+1}) dx dt \Bigg).
\end{align}

\begin{itemize}
    \item \textbf{The case of $|k-m|=1$}
\end{itemize}

By property \eqref{2.25}, we have $\overline{\mathbf u}_m = \overline{\mathbf u}_{m+1}$ and $\overline{\mathbf B}_m = \overline{\mathbf B}_{m+1}$ on the region $V_m \cap V_{m+1}$, which imply $\overline{\mathbf{W}^\varepsilon}_{+,m} = \overline{\mathbf{W}^\varepsilon}_{+,m+1}$ and $\overline{\mathbf{W}^\varepsilon}_{-,m} = \overline{\mathbf{W}^\varepsilon}_{-,m+1}$. Thus we obtain:
\begin{align}\label{3.71}
\sum_{|k-m|=1} D_{+,k,m}^\varepsilon
=& \sum_{|k-m|=1} \int_0^T \int_\Omega \theta \xi_k\xi_m \overline{\mathbf{W}^\varepsilon}_{+,k} \cdot \bigl[ (\overline{\mathbf{W}^\varepsilon}_{-,m} \cdot \nabla) \overline{\mathbf{W}^\varepsilon}_{+,m} \bigr] dx dt \notag\\
=& \sum_{m=1}^{N-1} \int_0^T \int_{V_m\cap V_{m+1}} 2\theta \xi_m\xi_{m+1} \overline{\mathbf{W}^\varepsilon}_{+,m} \cdot \bigl[ (\overline{\mathbf{W}^\varepsilon}_{-,m} \cdot \nabla) \overline{\mathbf{W}^\varepsilon}_{+,m} \bigr] dx dt \notag\\
=& \sum_{m=1}^{N-1} \int_0^T \int_{V_m\cap V_{m+1}} 2\theta \xi_m\xi_{m+1} \overline{\mathbf{W}^\varepsilon}_{-,m} \cdot \nabla\left( \frac{1}{2} |\overline{\mathbf{W}^\varepsilon}_{+,m}|^2 \right) dx dt \notag\\
=& -\Bigg( \sum_{m=1}^{N-1} \int_0^T \int_{V_m\cap V_{m+1}} |\overline{\mathbf{W}^\varepsilon}_{+,m}|^2 \xi_m\xi_{m+1} \overline{\mathbf{W}^\varepsilon}_{-,m} \cdot \nabla\theta \, dx dt \notag\\
&\quad + \sum_{m=1}^{N-1} \int_0^T \int_{V_m\cap V_{m+1}} \theta |\overline{\mathbf{W}^\varepsilon}_{+,m}|^2 \overline{\mathbf{W}^\varepsilon}_{-,m} \cdot \nabla(\xi_m\xi_{m+1}) dx dt \Bigg).
\end{align}

Combining \eqref{3.70}-\eqref{3.71}, 
and focusing on the boundary layer region $\Gamma_{4\delta}$, we obtain:
\begin{align}\label{3.72}
&\left|\sum_{|k-m|=0}D_{+,m}^\varepsilon + \sum_{|k-m|=1} D_{+,k,m}^\varepsilon\right|\notag\\
=& \Bigg| \frac{1}{2} \sum_{m=1}^N \int_0^T \int_{V_m} |\overline{\mathbf{W}^\varepsilon}_{+,m}|^2 \xi_m^2 \overline{\mathbf{W}^\varepsilon}_{-,m} \cdot \nabla\theta \, dx dt\notag \\
& + \sum_{m=1}^{N-1} \int_0^T \int_{V_m\cap V_{m+1}} |\overline{\mathbf{W}^\varepsilon}_{+,m}|^2 \overline{\mathbf{W}^\varepsilon}_{-,m} \cdot \nabla\theta\xi_m\xi_{m+1} dx dt \Bigg| \notag\\
\leq& C \sum_{m=1}^N \int_0^T \int_{\Gamma_{4\delta}} |\overline{\mathbf{W}^\varepsilon}_{+,m}|^2 |\overline{\mathbf{W}^\varepsilon}_{-,m} \cdot \nabla\theta| \, dx dt \notag\\
\leq& C \sum_{m=1}^N \Bigg(\delta  \int_0^T \|\overline{\mathbf{W}^\varepsilon}_{+,m}\|_{L^\infty(\Gamma_{4\delta})}^4 dt \Bigg)^{1/2} \Bigg( \int_0^T \int_{\Gamma_{4\delta}} |\nabla \overline{\mathbf{W}^\varepsilon}_{-,m}|^2 dx dt \Bigg)^{1/2}\notag\\
\leq&C \sum_{m=1}^N \Bigg( \int_0^T \|\overline{\mathbf{W}^\varepsilon}_{+,m}\|_{L^\infty(\Gamma_{4\delta})}^4 dt \Bigg)^{1/2} \Bigg( \delta \int_0^T \int_{\Gamma_{4\delta}} |\nabla \overline{\mathbf{W}^\varepsilon}_{-,m}|^2 dx dt \Bigg)^{1/2},
\end{align}
where the fact that $\text{meas}\left(\Gamma_{4\delta}\right)\le C\delta$ and Hardy inequality \eqref{2.9} are used in the penultimate step.
By the hypothesises \eqref{1.7} and \eqref{1.8}, we could get \eqref{3.72} tends to zero as $\varepsilon \to 0+$.

By a symmetric and identical argument, the negative branch approaches zero similarly. We omit the process and state the conclusion directly:
\begin{equation}\label{3.73}
\lim_{\varepsilon \to 0+} \left( \sum_{|k-m|=0} D_{-,m}^\varepsilon + \sum_{|k-m|=1} D_{-,k,m}^\varepsilon \right)= 0.
\end{equation}

Consequently, we obtain $I_5$ vanishes completely:
\begin{equation}\label{3.75}
\lim_{\varepsilon \to 0+} I_5 = 0.
\end{equation}

Combining all the above steps, we complete the proof of Proposition 3.3.
\end{proof}

\section{Conclusion}

\hspace{1.5em}Recall the resolved energy balance derived in Section 3:
\begin{equation}\label{4.1}
\frac{1}{2}\int_{\Omega}\theta\bigl(|\tilde{\mathbf{u}}^{\delta}(x,T)|^2+|\tilde{\mathbf{B}}^{\delta}(x,T)|^2\bigr)dx
-\frac{1}{2}\int_{\Omega}\theta\bigl(|\tilde{\mathbf{u}}^{\delta}(x,0)|^2+|\tilde{\mathbf{B}}^{\delta}(x,0)|^2\bigr)dx = \sum_{i=1}^{5} R_i .
\end{equation}

Taking the limit \(\varepsilon\to0\) in \eqref{4.1}, the preceding analysis shows that each error term on its right-hand side 
vanishes. The strong convergence of the initial data \((\mathbf{u}_0^\varepsilon,\mathbf{B}_0^\varepsilon)\to(\mathbf{u}_0,\mathbf{B}_0)\) in \(L^2(\Omega)\) implies that the initial resolved energy converges to the initial energy of the ideal system. Hence the total energy is strictly conserved in the inviscid and non-resistive limit. Together with the uniform bounds assumed in the theorem, this energy conservation precludes any anomalous dissipation.

Under the assumptions \eqref{1.5}-\eqref{1.7} and Lemma \ref{l2}, there exists some solutions $(\mathbf u^\varepsilon, \mathbf B^\varepsilon, P^\varepsilon)$ such that, up to some subsequences
\begin{align}
\label{4.2}&\mathbf u^\varepsilon\rightharpoonup\mathbf u\hspace{3pt}\text{in}\hspace{3pt} L^3((0,T);B^{\alpha_1,\infty}_3(\Omega^\delta))\cap L^\infty((0,T);L^2(\Omega)),\\
\label{4.3}&\mathbf B^\varepsilon\rightharpoonup\mathbf B\hspace{3pt}\text{in}\hspace{3pt} L^3((0,T);B^{\alpha_1,\infty}_3(\Omega^\delta))\cap L^\infty((0,T);L^2(\Omega)),\\
\label{4.4}&P^\varepsilon\rightharpoonup P\hspace{3pt}\text{in}\hspace{3pt} L^\frac{3}{2}((0,T);L^\frac{3}{2}(\Omega)).
\end{align}
Then, it is easy to get
\begin{align}
\label{4.5}&\partial_t\mathbf u^\epsilon=-\mu\Delta\mathbf u^\epsilon-\text{div}(\mathbf u\otimes\mathbf u)-\nabla P^\epsilon-\text{div}(\mathbf B\otimes\mathbf B)\in L^\frac{3}{2}((0,T);W^{-1,\frac{3}{2}}(\Omega)),\\
\label{4.6}&\partial_t\mathbf B^\epsilon=\nu\Delta\mathbf B^\epsilon-(\mathbf u^\epsilon\cdot\nabla)\mathbf B^\epsilon+(\mathbf B^\epsilon\cdot\nabla)\mathbf u^\epsilon
\in L^\frac{3}{2}((0,T);W^{-1,\frac{3}{2}}(\Omega)).
\end{align}
Then due to standard Aubin-Lions compactness, we have
\begin{align}
\label{4.7}&\mathbf u^\varepsilon\rightarrow\mathbf u\hspace{3pt}\text{in}\hspace{3pt} L^3((0,T);L^3(\Omega^\delta))\cap C([0,T];L^2_{weak}(\Omega)),\\
\label{4.8}&\mathbf B^\varepsilon\rightarrow\mathbf B\hspace{3pt}\text{in}\hspace{3pt} L^3((0,T);L^3(\Omega^\delta))\cap C([0,T];L^2_{weak}(\Omega)).
\end{align}
When $\varepsilon\rightarrow0+$, it is easy to find
\begin{align}
\label{4.9}\left|\int^T_0\int_\Omega\varepsilon^{p}\nabla\mathbf u^\varepsilon\cdot\nabla\phi\, dxdt\right| &\leq \left(\int^T_0\int_\Omega\varepsilon^{p}|\nabla\mathbf u^\varepsilon|^2\, dxdt\right)^{\frac{1}{2}}\left(\int^T_0\int_\Omega\varepsilon^{p}|\nabla\phi|^2\, dxdt\right)^{\frac{1}{2}}\notag\\
&\leq C\varepsilon^{\frac{p}{2}} \rightarrow 0,
\end{align}
and similarly for the magnetic diffusion term,
\begin{align}
\label{4.10}\left|\int^T_0\int_\Omega\varepsilon^{q}\nabla\mathbf B^\varepsilon\cdot\nabla\phi\, dxdt\right| &\leq \left(\int^T_0\int_\Omega\varepsilon^{q}|\nabla\mathbf B^\varepsilon|^2\, dxdt\right)^{\frac{1}{2}}\left(\int^T_0\int_\Omega\varepsilon^{q}|\nabla\phi|^2\, dxdt\right)^{\frac{1}{2}}\notag\\
&\leq C\varepsilon^{\frac{q}{2}}\rightarrow 0,
\end{align}
which yields, up to a subsequence, the limit \((\mathbf{u},\mathbf{B})\) is the weak solution of the ideal MHD equations.

This completes the proof of Theorem 1.1.

\vskip 0.5cm
\noindent {\bf Acknowledgements}

\vskip 0.1cm
The research of  \v{S}. Ne\v{c}asov\'{a} has been supported by the Praemium Academiae of \v S. Ne\v casov\' a. T. Tang is partially supported by NSFC No. 12371246 and Qing Lan Project of Jiangsu Province.


\begin{thebibliography}{99}


\bibitem{bardos19} C. Bardos, E. S. Titi and E. Wiedemann, Onsager's conjecture with physical boundaries and an application to the vanishing viscosity limit, {\em Commun. Math. Phys.}, 370 (2019), 291--310.

\bibitem{bra} A. Brandenburg and K. Subramanian K, Astrophysical magnetic fields and nonlinear dynamo theory, {\em Phys. Rep.}, 417 (2005), 1-209.



\bibitem{brkic24} P. Brkic and E. Wiedemann, On the vanishing viscosity limit for 3D axisymmetric flows without swirl, {\em Nonlinear Anal. Real World Appl.}, 77 (2024), 104066.


\bibitem{ca} R. Caflisch, I. Klapper, G. Steele, Remarks on singularities, dimension and energy dissipation for
ideal hydrodynamics and MHD, {\em Commun. Math. Phys.}, 184 (1997), 443-455.


\bibitem{chen22} R. M. Chen, Z. Liang and D. Wang, A Kato-type criterion for vanishing viscosity near Onsager's critical regularity, {\em Arch. Ration. Mech. Anal.}, 246 (2022), 535--559.

\bibitem{christian26} S. Christian, E. Wiedemann and J. Wo\'{z}nicki, Strong Convergence of Vorticities in the 2D Viscosity Limit on a Bounded Domain, {\em  J. Nonlinear Sci.}, 36 (2026), Paper No. 22, 23 pp.

\bibitem{cobb} D. Cobb and F. Fanelli, Els\"{a}sser formulation of the ideal MHD and improved lifespan in two space dimensions, {\em J. Math. Pures Appl.}, 169 (2023), 189-236.

\bibitem{const94} P. Constantin, W. E and E. S. Titi, Onsager's conjecture on the energy conservation for solutions of Euler's equation, {\em Commun. Math. Phys.}, 165 (1994), 207--209.

\bibitem{dallas14} V. Dallas and A. Alexakis, The signature of initial conditions on magnetohydrodynamic turbulence, {\em Astrophys. J. Lett.}, 788 (2014), L36 (4pp).


\bibitem{esc17} L. Escauriaza and S. Montaner, Some remarks on the $L^p$ regularity of second derivatives of solutions to non-divergence elliptic equations and the Dini condition, {\em Atti Accad. Naz. Lincei Rend. Lincei Mat. Appl.}, 28 (2017), 49--63.

\bibitem{eyink94} G. L. Eyink, Energy dissipation without viscosity in ideal hydrodynamics I. Fourier analysis and local energy transfer, {\em Physica D}, 78 (1994), 222--240.


\bibitem{faraco20} D. Faraco and S. Lindberg, Proof of Taylor's Conjecture on Magnetic Helicity Conservation, {\em Comm. Math. Phys.}, 373 (2020), 707--738.

\bibitem{faraco24} D. Faraco, S. Lindberg and L. Sz\'ekelyhidi Jr., Magnetic helicity, weak solutions and relaxation of ideal MHD, {\em Comm. Pure Appl. Math.}, 77 (2024), 2387--2412.


\bibitem{feireisl17} E. Feireisl, P. Gwiazda, A. \'Swierczewska-Gwiazda and E. Wiedemann, Regularity and energy conservation for the compressible Euler equations, {\em Arch. Ration. Mech. Anal.}, 223 (2017), 1375--1395.

\bibitem{kang07} E. Kang and J. Lee, Remarks on the magnetic helicity and energy conservation for ideal magneto-hydrodynamics, {\em Nonlinearity}, 20 (2007), 2681-2689.

\bibitem{kato84} T. Kato, Remarks on zero viscosity limit for nonstationary Navier-Stokes flows with boundary, {\em Seminar on nonlinear partial differential equations}, Berkeley, in: Math. Sci. Res. Inst. Publ., vol. 2, 1984, pp. 85-98.

\bibitem{kozono89} H. Kozono, Weak and classical solutions of the two-dimensional magnetohydrodynamic equations, {\em Tohoku Math. J.}, 41 (1989), 471--488.

\bibitem{kufner77} A. Kufner, O. John and S. Fu\v{c}\'ik, {\em Function Spaces}, Noordhoff International Publishing, Leyden and Academia, Prague (1977).

\bibitem{lin15} F. Lin, L. Xu and P. Zhang, Global small solutions of 2-D incompressible MHD system, {\em  J. Differential Equations}, 259 (2015), 5440--5485.


\bibitem{maekawa14} Y. Maekawa, On the inviscid limit problem of the Navier-Stokes equations for half-plane in the analytic setting, {\em Commun. Pure Appl. Math.}, 67 (2014), 1045--1105.

\bibitem{mazz26} A. L. Mazzucato, D. Wang and W. Wei, On the vanishing viscosity limit for incompressible flows with inflow/outflow boundary conditions, {\em  J. Differential Equations}, 467 (2026), 114338, 28pp.

\bibitem{ne} J. Ne\v{c}as, Direct methods in the theory of elliptic equations, Springer Monographs in Mathematics, Springer, Heidelberg, 2012. Translated from the 1967 French original by Gerard Tronel and Alois Kufner.




\bibitem{sam98} M. Sammartino and R. E. Caflisch, Zero viscosity limit for analytic solutions of the Navier-Stokes equation on a half-space. I. Existence for Euler and Prandtl equations, {\em Commun. Math. Phys.}, 192 (1998), 433--461.

\bibitem{ser83} M. Sermange and R. Temam, Some mathematical questions related to the MHD equations, {\em Comm. Pure Appl. Math.}, 36 (1983), 635--664.

\end{thebibliography}
\end{document}